\numberwithin{equation}{section}
\newtheorem{theorem}{Theorem}[section]
\newtheorem{lemma}{Lemma}[section]
\newtheorem{proposition}{Proposition}[section]
\newtheorem{conjecture}{Conjecture}[section]
\newtheorem{claim}{Claim}[section]
\theoremstyle{definition}
\newtheorem{definition}{Definition}[section]
\newtheorem{remark}{Remark}[section]
\newcommand{\R}{\operatorname{\mathbb{R}}}
\newcommand{\Z}{\operatorname{\mathbb{Z}}}
\newcommand{\C}{\operatorname{\mathbb{C}}}
\newcommand{\Q}{\operatorname{\mathbb{Q}}}
\newcommand{\ve}{\operatorname{\varepsilon}}
\newcommand{\Jac}{\operatorname{\mathrm{Jac}}}
\newcommand{\Det}{\operatorname{\mathrm{Det}}}
\newcommand{\mT}{\operatorname{\mathcal{T}}}
\newcommand{\mF}{\operatorname{\mathcal{F}}}
\newcommand{\bZ}{\operatorname{\mathbf{Z}}}
\newcommand{\Val}{\operatorname{\mathrm{Val}}}
\newcommand{\val}{\operatorname{\mathrm{val}}}
\newcommand{\ee}{\bf e}
\begin{document}

\title{Tropical spectral curves, Fay's trisecant identity, and generalized ultradiscrete Toda lattice}

\dedicatory{ 
Dedicated to Professor Tetsuji Miwa on his 60th birthday
}

\author{Rei Inoue}
\address{Faculty of Pharmaceutical Sciences, 
Suzuka University of Medical Science
\newline\phantom{iii}3500-3 Minami-tamagaki, Suzuka, Mie, 513-8670, Japan}
\email{reiiy@suzuka-u.ac.jp}

\author{Shinsuke Iwao}
\address{Graduate School of Mathematical Sciences,
The University of Tokyo
\newline\phantom{iii}3-8-1 Komaba Meguro-ku, Tokyo 153-8914, Japan}
\email{iwao@ms.u-tokyo.ac.jp}

\begin{abstract}
We study the generalized ultradiscrete periodic Toda lattice 
$\mT(M,N)$ which has tropical spectral curve.
We introduce a tropical analogue of Fay's trisecant identity, 
and apply it to construct a general solution to $\mT(M,N)$. 
\end{abstract}

\keywords{Tropical geometry; Riemann's theta function; Toda lattice}

\maketitle

\section{Introduction}

The {\em ultradiscrete} periodic Toda lattice is 
an integrable system described by 
a piecewise-linear map \cite{KimijimaTokihiro02}.
Recently, its algebro geometrical aspect is clarified
\cite{IT08,IT09,IT09b}
by applying the tropical geometry, a combinatorial algebraic geometry
rapidly developed during this decade \cite{EKL06,IMS-Book, SpeyerSturm04}.
This system has tropical spectral curves, and what proved are 
that its general isolevel set is isomorphic to the tropical
Jacobian of the tropical hyperelliptic curve, 
and that its general solution is written in terms of 
the tropical Riemann's theta function.
The key to the solution is the tropical analogue of Fay's trisecant identity
for a special family of hyperelliptic curves \cite{IT09}.

On the other hand, 
there exists a generalization of {\em discrete} periodic Toda lattice $T(M,N)$,
where $M$ (resp. $N$) is a positive integer
which denotes the level of generalization (resp. the periodicity)  
of the system. 
The $M=1$ case, $T(1,N)$, is the original discrete Toda lattice
of $N$-periodicity.
When $\gcd(M,N)=1$, $T(M,N)$ reduces to a special case 
of the integrable multidiagonal Lax matrix \cite{vanMoerMum79},
and the general solution to $T(M,N)$ is recently constructed \cite{Iwao09}.

The aim of this paper is twofold: 
the first one is to introduce the tropical analogue of Fay's trisecant identity
not only for hyperelliptic but also for more general tropical curves.
The second one is to study 
the generalization of {\em ultradiscrete} periodic Toda lattice $\mT(M,N)$
by applying the tropical Fay's trisecant identity,
as a continuation of the study on $\mT(1,N)$ \cite{IT08,IT09,IT09b}.

This paper is organized as follows:
in \S 2 we review some notion of tropical geometry 
\cite{Mikha05,MikhaZhar06,Iwao08}, and 
introduce the tropical analogue of Fay's trisecant identity
(Theorem \ref{tropicalFay}) by applying the correspondence of integrations
over complex and tropical curves \cite{Iwao08}.
In \S 3 we introduce the generalization of the discrete periodic Toda lattice 
$T(M,N)$ and its ultradiscretization $\mT(M,N)$.
We reconsider the integrability of $T(M,N)$ 
(Proposition \ref{prop:integrability}).
In \S 4 we demonstrate the general solution
to $\mT(3,2)$, and give conjectures on $\mT(M,N)$ 
(Conjectures \ref{conj:1} and \ref{conj:2}).

In closing the introduction, we make a brief remark on  
the interesting close relation between 
the ultradiscrete periodic Toda lattice and 
the {\em periodic box and ball system} (pBBS) \cite{KimijimaTokihiro02},
which is generalized to that between $\mT(M,N)$ and 
the pBBS of $M$ kinds of balls
\cite{NagaiTokihiroSatsuma99,Iwao09b}.
When $M=1$, the relation is explained at the level of 
tropical Jacobian \cite{IT08}.
We expect that our conjectures on $\mT(M,N)$ also account for
the tropical geometrical aspects of the recent results 
\cite{KunibaTakagi09} on the pBBS of $M$ kinds of balls.

\section{Tropical curves and Riemann's theta function}
\subsection{Good tropicalization of algebraic curves}

Let $K$ be a subfield of $\C$ and 
$K_{\ve}$ be the field of convergent Puiseux series 
in ${\bf e} := {\rm e}^{-1/\ve}$ over $K$.
Let $\val:K_{\ve}\to \Q\cup \{\infty\}$ be the natural valuation with respected to 
$\ee$.
Any polynomial $f_{\ve}$ in $K_{\ve}[x,y]$ is expressed uniquely as 
\[\textstyle
f_{\ve}=\sum_{w=(w_1,w_2)\in \Z^2}{a_w(\ve)x^{w_1}y^{w_2}},\qquad
a_w(\ve)\in K_{\ve}.
\]
Define the tropical polynomial $\Val(X,Y;f_{\ve})$ associated with $f_{\ve}$
by the formula:
$\textstyle
\Val(X,Y;f_{\ve}):=\min_{w\in\Z^2}{[\val(a_w)+w_1X+w_2Y]}.
$
We call $\Val(X,Y;f_{\ve})$ the {\em tropicalization} of $f_{\ve}$.

We take $f_{\ve} \in K_{\ve}[x,y]$. 
Let $C^0(f_{\ve})$ be the affine algebraic
curve over $K_{\ve}$ defined by $f_{\ve}=0$.
We write $C(f_{\ve})$ for 
the complete curve over $K_{\ve}$
such that
$C(f_{\ve})$ contains $C^0(f_{\ve})$ as a dense open subset,
and that $C(f_{\ve}) \setminus C^0(f_{\ve})$ consists of non-singular points.  
The \textit{tropical curve} $TV(f_{\ve})$ is a subset of $\R^2$
defined by:
\[
TV(f_{\ve})=\left\{(X,Y)\in\R^2\,\left\vert\,
\begin{array}{l}
\mbox{the function $\Val(X,Y;f_{\ve})$}\\
\mbox{ is not smooth at $(X,Y)$ }
\end{array}
\right\}\right..
\]
Denote $\Lambda(X,Y;f_{\ve}):=\{w\in \Z^2\,\vert\, \Val(X,Y;f_{\ve})
=\val(a_w)+w_1X+w_2Y
\}$.
The definition of the tropical curve can be put into:
\[
TV(f_{\ve})=\{(X,Y)\in\R^2\,\vert\,
\sharp\Lambda(X,Y;f_{\ve})\geq 2
\}.
\]
For $P=(X,Y)\in \R^2$, we define $f_{\ve}^P:=\sum_{w\in\Lambda(X,Y;f_{\ve})}{a_w x^{w_1}y^{w_2}}$.


To make use of the results of tropical geometry for
real/complex analysis, 
we introduce the following condition as a criterion 
of genericness of tropical curves.

\begin{definition}\label{def2.1}
We say $C(f_{\ve})$ has a \textit{good tropicalization} if:
\begin{enumerate}
\def\labelenumi{(\theenumi)}
\item $C(f_{\ve})$ is an irreducible reduced non-singular curve over $K_{\ve}$,
\item $f^P_{\ve}=0$ defines an affine reduced 
non-singular curve in $(K_{\ve}^\times)^2$ for all 
$P\in TV(f_{\ve})$ (maybe reducible).
\end{enumerate}
\end{definition}

\begin{remark}
  The notion of a {\em good tropicalization} was first introduced in
  \cite[Section 4.3]{Iwao08}.
  The above definition gives essentially the same notion. 
\end{remark}

\subsection{Smoothness of tropical curves}

For the tropical curve $\Gamma := TV(f_{\ve})$,
we define the set of vertices $V(\Gamma)$:
\begin{align*}
  V(\Gamma) = \{(X,Y) \in \Gamma ~|~ \sharp\Lambda(X,Y;f_{\ve})\geq 3 \}.
\end{align*}
We call each disjointed element of $\Gamma \setminus V(\Gamma)$ 
an edge of $\Gamma$.
For an edge $e$, we have the {\it primitive tangent vector} 
$\xi_e = (n,m) \in \Z^2$ as $\gcd(n,m) = 1$.
Note that the vector $\xi_e$ is uniquely determined up to sign.

\begin{definition}\cite[\S 2.5]{Mikha05} 
  The tropical curve $\Gamma$ is {\it smooth} if:
  \begin{enumerate}
  \def\labelenumi{(\theenumi)}
  \item All the vertices are trivalent, {\it i.e.} 
  $\sharp\Lambda(X,Y;f_{\ve})=3$ for all $(X,Y) \in V(\Gamma)$.

  \item For each trivalent vertex $v \in V(\Gamma)$, 
  let $\xi_1, \xi_2$ and $\xi_3$ be the primitive tangent vectors of 
  the three outgoing edges from $v$. Then we have 
  $\xi_1 + \xi_2 + \xi_3 = 0$ and $|\xi_i \wedge \xi_j| = 1$ 
  for $i \neq j \in \{1,2,3\}$.
  \end{enumerate}
\end{definition}

When $\Gamma$ is smooth, the genus of $\Gamma$ is $\dim H_1(\Gamma,\Z)$.

\subsection{Tropical Riemann's theta function}

For an integer $g \in \Z_{>0}$,
a positive definite symmetric matrix $B \in M_g(\R)$ and 
$\beta \in \R^g$ we define a function on $\R^g$ as
$$ 
  q_\beta({\bf m},{\bf Z}) = \frac12 {\bf m}B{\bf m}^{\bot}+
  {\bf m}({\bf Z}+\beta B)^{\bot}
  \qquad ({\bf Z} \in \R^g, ~ {\bf m} \in \Z^g).
$$
The tropical Riemann's theta function $\Theta(\bZ;B)$ and its
generalization $\Theta[\beta](\bZ;B)$ are given by \cite{MikhaZhar06,IT09}
\begin{align*}
  &\Theta({\bf Z};B)=\min_{{\bf m}\in \Z^g} q_0({\bf m},{\bf Z}),
  \\
  &\Theta[\beta]({\bf Z};B) = \frac12 \beta B \beta^{\bot}+
   \beta {\bf Z}^{\bot}+
   \min_{{\bf m} \in \Z^g} q_\beta({\bf m},{\bf Z}).
\end{align*}
Note that $\Theta[0](\bZ;B) = \Theta(\bZ;B)$.
The function $\Theta[\beta](\bZ;B)$ satisfies the quasi-periodicity:
\begin{align*}
  \Theta[\beta]({\bf Z}+K{\bf l})
  =
  -\frac{1}{2} {\bf l}K{\bf l}^{\bot} -{\bf l}{\bf Z}^{\bot}
  +\Theta[\beta]({\bf Z})  \qquad ({\bf l}\in \Z^g). 
\end{align*}
We also write $\Theta({\bf Z})$ and $\Theta[\beta]({\bf Z})$
for $\Theta({\bf Z};B)$ and $\Theta[\beta]({\bf Z};B)$
without confusion.
We write ${\bf n} = \arg_{{\bf m} \in \Z^g} q_\beta({\bf m},{\bf Z})$ 
when $\min_{{\bf m} \in \Z^g} q_\beta({\bf m},{\bf Z}) 
= q_\beta({\bf n},{\bf Z})$.

\subsection{Tropical analogue of Fay's trisecant identity}

For each $\bar{\ve} \in \R_{>0}$,
we write $C(f_{\bar{\ve}})$ for the base change of $C(f_{\ve})$
to $\C$ via a map
$\iota : ~K_{\ve} \to \C$ given by $\ve \mapsto \bar{\ve}$.

\begin{theorem}\label{Trop-period}\cite[Theorem 4.3.1]{Iwao08}
Assume $C(f_{\ve})$ has a good tropicalization and
$C(f_{\bar{\ve}})$ is non-singular.
Let $B_{\bar{\ve}}$ and $B_T$ be the period matrices 
for $C(f_{\bar{\ve}})$ and $TV(f_{\ve})$ respectively.
Then we have the relation
$$
  \frac{2\pi\bar{\ve}}{\sqrt{-1}} B_{\bar{\ve}} 
  \sim  B_T \quad (\bar{\ve} \to 0).
$$
(It follows from the assumption that 
the genus of $C(f_{\ve})$ and $C(f_{\bar{\ve}})$ coincide.) 
\end{theorem}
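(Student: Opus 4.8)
The plan is to establish the asymptotic relation between period matrices by tracking how the periods of holomorphic differentials on $C(f_{\bar\ve})$ degenerate as $\bar\ve \to 0$. First I would fix a symplectic basis $A_1,\dots,A_g,B_1,\dots,B_g$ of $H_1(C(f_{\bar\ve}),\Z)$ that is compatible with the combinatorial structure of the tropical curve $\Gamma = TV(f_{\ve})$: using the good tropicalization hypothesis, the tropical curve is the dual complex of a degeneration, and each cycle of $\Gamma$ lifts to a vanishing cycle on $C(f_{\bar\ve})$. One normalizes the basis of holomorphic differentials $\omega_1,\dots,\omega_g$ so that $\int_{A_i}\omega_j = \delta_{ij}$; then $B_{\bar\ve} = (\int_{B_i}\omega_j)_{ij}$ by definition, and likewise $B_T$ is the tropical period matrix built from integrating the (constant) tropical differentials along the edge paths of $\Gamma$ with the lattice-length metric.

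The core of the argument is the local analysis near each node of the degenerate curve, i.e.\ near each edge of $\Gamma$. On the $\ve$-adic annulus corresponding to an edge $e$ with primitive tangent vector $\xi_e$ and lattice length $\ell(e)$, the normalized differential $\omega_j$ behaves like $c_j\,\frac{dt}{t}$ up to holomorphic corrections, and the modulus of the annulus is governed by $\val$ of the relevant coefficient of $f_\ve$ — which is exactly $\ell(e)$ by the definition of $\Val$ and the tropical curve. Integrating $\frac{dt}{t}$ around and across these annuli produces, in the limit, a factor of $2\pi\sqrt{-1}$ times $\bar\ve^{-1}$ times the tropical edge-integral; this is precisely the correspondence of integrations over complex and tropical curves from \cite{Iwao08} that we are allowed to invoke. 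Assembling the contributions edge by edge along the $B$-cycles, and using condition (2) of good tropicalization to guarantee that the intermediate fibers $f^P_\ve = 0$ stay non-singular (so no extra vanishing cycles appear and the combinatorial genus equals the analytic genus), one obtains $\frac{2\pi\bar\ve}{\sqrt{-1}}B_{\bar\ve} \to B_T$ entrywise, which is the claimed asymptotic equivalence. The parenthetical genus statement then follows since a matching of period matrices of the correct sizes forces $\dim H_1(\Gamma,\Z) = g(C(f_{\bar\ve})) = g(C(f_\ve))$.

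The main obstacle I expect is the uniformity of the estimates: one must show that the holomorphic corrections to the $\frac{dt}{t}$ behavior of the normalized differentials, and the contributions of the arcs of the $B$-cycles lying on the "thick" pieces of the curve (away from the nodes), are genuinely $o(\bar\ve^{-1})$ uniformly — not just pointwise — as $\bar\ve\to 0$. This is where the good tropicalization hypothesis does real work: irreducibility and non-singularity of $C(f_\ve)$ control the global geometry, while non-singularity of each $f^P_\ve=0$ controls the local models at the vertices, ensuring the semistable reduction has exactly the dual graph $\Gamma$ with the right edge lengths and no hidden components. Handling vertices where several edges meet (the trivalent points) requires the local equation $f^P_\ve$ to cut out a smooth curve so that the three branches and their annular neighborhoods glue transversally; once that is in place the edge-by-edge bookkeeping is routine and the limit can be read off. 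I would organize the proof so that this uniform estimate is isolated as the single analytic lemma, with everything else being combinatorial accounting of cycles on $\Gamma$.
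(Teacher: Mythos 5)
You should first note that the paper contains no proof of this theorem: it is quoted verbatim from \cite[Theorem 4.3.1]{Iwao08}, and the content of that cited result is precisely the correspondence between period integrals on $C(f_{\bar{\ve}})$ and tropical integrals on $TV(f_{\ve})$. This makes the pivotal step of your argument circular: exactly where the factor $2\pi\sqrt{-1}\,\bar{\ve}^{-1}$ times the tropical edge-integral is supposed to appear, you write that this ``is precisely the correspondence of integrations over complex and tropical curves from \cite{Iwao08} that we are allowed to invoke''--- but that correspondence \emph{is} the statement to be proven, not an available ingredient. Stripped of that appeal, what remains is the standard degeneration heuristic (a symplectic basis adapted to $\Gamma$, the $dt/t$ local model on the edge annuli, modulus of an annulus of order $\ell(e)/\bar{\ve}$), together with an explicit acknowledgment that the actual analytic content --- uniform $o(\bar{\ve}^{-1})$ control of the corrections to the $dt/t$ model and of the contributions of the thick pieces, uniformly over all edges and vertices --- is deferred to ``a single analytic lemma'' that you never prove. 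That lemma is exactly where the good-tropicalization hypothesis (non-singularity of the local models $f^P_{\ve}=0$) does its work in Iwao's argument, so your proposal reproduces the architecture of the known proof while omitting its substance.

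There is also a concrete slip in the homological bookkeeping: the cycles of $\Gamma$ do not lift to vanishing cycles. In the relevant degeneration the vanishing cycles are the loops encircling the edge annuli (one per node of the special fiber), and it is these that must serve as the $A$-cycles so that the normalization $\int_{A_i}\omega_j=\delta_{ij}$ forces the $B$-periods to blow up like $\bar{\ve}^{-1}$; the cycles of $\Gamma$ lift to the $B$-cycles, which traverse the annuli. As written, your identification would not produce a divergent $B_{\bar{\ve}}$ of the required shape. This point is repairable, but combined with the circular citation and the unproved uniform estimate, the proposal is a plausible outline of the strategy behind \cite[Theorem 4.3.1]{Iwao08} rather than a proof of it.
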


A nice application of this theorem is to give
the tropical analogue of Fay's trisecant identity.
For the algebraic curve $C(f_{\ve})$ of Theorem \ref{Trop-period}, 
we have the following:  
 
\begin{theorem}\label{tropicalFay}
We continue the hypothesis and notation in Theorem \ref{Trop-period},
and assume $TV(f_{\ve})$ is smooth.
Let $g$ be the genus of $C(f_{\ve})$ and 
$(\alpha,\beta) \in \frac12 \Z^{2g}$ be a non-singular 
odd theta characteristic for $\Jac(C(f_{\ve}))$.
For $P_1, P_2, P_3, P_4$ 
on the universal covering space of $TV(f_{\ve})$, 
we define the sign $s_i\in \{\pm 1\}$ $(i=1,2,3)$ as 
$s_i=(-1)^{k_i}$,
where
\begin{align*}
\begin{split}
k_1&=2\alpha\cdot 
\left(\arg_{{\bf m} \in \Z^g} q_\beta({\bf m},\int_{P_3}^{P_2})+
\arg_{{\bf m} \in \Z^g} q_\beta({\bf m},\int_{P_1}^{P_4})\right),\\
k_2&=2\alpha\cdot 
\left(\arg_{{\bf m} \in \Z^g} q_\beta({\bf m},\int_{P_3}^{P_1})+
\arg_{{\bf m} \in \Z^g} q_\beta({\bf m},\int_{P_4}^{P_2})\right),\\
k_3&=1+2\alpha\cdot 
\left(\arg_{{\bf m} \in \Z^g} q_\beta({\bf m},\int_{P_3}^{P_4})+
\arg_{{\bf m} \in \Z^g} q_\beta({\bf m},\int_{P_1}^{P_2})\right).
\end{split}
\end{align*}
Set the functions $F_1(\bZ), F_2(\bZ), F_3(\bZ)$ of
$\bZ \in \R^g$ as
\begin{align*}%
\begin{split}
F_1(\bZ)&=\Theta({\bf Z}+\int_{P_1}^{P_3})+\Theta({\bf Z}+\int_{P_2}^{P_4})
 +\Theta[\beta](\int_{P_3}^{P_2})+\Theta[\beta](\int_{P_1}^{P_4}),\\
F_2(\bZ)&=\Theta({\bf Z}+\int_{P_2}^{P_3})+\Theta({\bf Z}+\int_{P_1}^{P_4})
 +\Theta[\beta](\int_{P_3}^{P_1})+\Theta[\beta](\int_{P_4}^{P_2}),\\
F_3(\bZ)&=\Theta({\bf Z}+\int_{P_1+P_2}^{P_3+P_4})+\Theta({\bf Z})
+\Theta[\beta](\int_{P_4}^{P_3})+\Theta[\beta](\int_{P_1}^{P_2}).
\end{split}
\end{align*}
Then, the formula
\begin{align}\label{trop-fay}
  F_i(\bZ)=\min[F_{i+1}(\bZ),F_{i+2}(\bZ)]
\end{align}  
holds if $s_i=\pm 1,s_{i+1}=s_{i+2}=\mp 1$ for $i \in \Z / 3 \Z$.
\end{theorem}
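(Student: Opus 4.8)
The plan is to derive the tropical identity \eqref{trop-fay} as the ultradiscrete limit ($\bar\ve \to 0$) of the classical Fay trisecant identity on the complex curve $C(f_{\bar\ve})$, using Theorem \ref{Trop-period} to match the analytic data. First I would recall the classical Fay identity in the form adapted to a non-singular odd theta characteristic $(\alpha,\beta)$: for points $p_1,p_2,p_3,p_4$ on $C(f_{\bar\ve})$ and $z \in \C^g$,
\begin{align*}
  & \theta\!\left(z+\int_{p_1}^{p_3}\right)\theta\!\left(z+\int_{p_2}^{p_4}\right) E(p_3,p_2)E(p_1,p_4) \\
  & \quad + \theta\!\left(z+\int_{p_2}^{p_3}\right)\theta\!\left(z+\int_{p_1}^{p_4}\right) E(p_3,p_1)E(p_4,p_2) \\
  & \quad = \theta\!\left(z+\int_{p_1+p_2}^{p_3+p_4}\right)\theta(z)\, E(p_3,p_4)E(p_1,p_2),
\end{align*}
where the prime form $E(p,q)$ is itself a quotient of a theta with characteristic $(\alpha,\beta)$ by holomorphic half-differentials. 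The strategy is to substitute $z = \frac{2\pi\bar\ve}{\sqrt{-1}}\bZ$ (consistent with Theorem \ref{Trop-period}) and $\int_{p_i}^{p_j} \leftrightarrow \frac{2\pi\bar\ve}{\sqrt{-1}}\int_{P_i}^{P_j}$, then take $-\bar\ve\log|\cdot|$ of both sides.

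The key steps, in order: (i) establish the ultradiscrete limit $\lim_{\bar\ve\to0}(-\bar\ve)\log\bigl|\theta(\tfrac{2\pi\bar\ve}{\sqrt{-1}}\bZ;B_{\bar\ve})\bigr| = \Theta(\bZ;B_T)$ and the analogous statement for $\theta[\beta]$ giving $\Theta[\beta]$, which follows by term-by-term estimation of the theta series together with $\frac{2\pi\bar\ve}{\sqrt{-1}}B_{\bar\ve}\sim B_T$; the definitions of $q_\beta$, $\Theta$ and $\Theta[\beta]$ in \S2.3 are engineered precisely so that the Gaussian exponents become the piecewise-linear forms $q_\beta(\mathbf m,\bZ)$ and the minimum over $\mathbf m\in\Z^g$ emerges from the dominant term. (ii) Convert the additive constants: $(-\bar\ve)\log|E(p_i,p_j)|$ tends to a combination of $\Theta[\beta]$ evaluated at the $\int_{P_i}^{P_j}$, up to the half-differential factors which, because $TV(f_{\ve})$ is smooth, contribute only finite corrections that cancel pairwise across the identity; this is where smoothness of the tropical curve and non-singularity of the theta characteristic are used. (iii) Under $x = \min(y,z)$ tropicalization, the sum $y+z$ of two positive quantities becomes $\min$, but a sum with differing signs becomes the value of the dominant term; the signs $s_i = (-1)^{k_i}$ are exactly the signs of the three summands in Fay's identity in the limit, computed from the leading behaviour of the thetas, which is governed by the arg of the relevant $q_\beta$ — hence the formulas for $k_1,k_2,k_3$ (the extra $+1$ in $k_3$ being the sign on the right-hand side of Fay's identity). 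When $s_i = \pm1$ and $s_{i+1}=s_{i+2}=\mp1$, the three-term relation $F_i \text{ vs } \min(F_{i+1},F_{i+2})$ is forced: the two like-signed terms combine under $\min$ and balance the third.

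The main obstacle I anticipate is step (ii): controlling the prime form and, more precisely, justifying that the non-theta factors (the holomorphic half-differentials $\sqrt{dz}$ and any normalization constants, as well as the subexponential prefactors hidden in the asymptotics of $\theta$) do not survive the limit, or survive only as terms common to all three $F_i$ and thus drop out of \eqref{trop-fay}. This requires a careful choice of local coordinates near $P_1,\dots,P_4$ compatible with the good tropicalization, and a uniform (in $\bar\ve$) estimate so that the limit of $-\bar\ve\log|\cdot|$ can be taken inside the finite sum. A secondary technical point is that $\arg_{\mathbf m}q_\beta(\mathbf m,\bZ)$ may fail to be unique on a measure-zero set; there one takes any minimizer, and \eqref{trop-fay} should be read as holding on the complement, with the general case following by continuity of $\Theta$ and $\Theta[\beta]$. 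Apart from this, the argument is a bookkeeping of signs and dominant exponents, and the shape of the statement — $F_i = \min(F_{i+1},F_{i+2})$ exactly when the sign pattern is $(\pm,\mp,\mp)$ — is precisely the tropicalization of $a + b = c$ with $a,b,c$ of prescribed signs.
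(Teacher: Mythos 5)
Your proposal follows exactly the route the paper intends: the paper states Theorem \ref{tropicalFay} without a detailed proof, presenting it as an application of Theorem \ref{Trop-period} (the period correspondence of \cite{Iwao08}) and as a generalization of the hyperelliptic case of \cite{IT08,IT09}, where the identity is obtained precisely by taking the $-\bar{\ve}\log|\cdot|$ limit of the classical Fay trisecant identity written with prime forms for a non-singular odd characteristic, with the half-differential factors cancelling across the three terms and the signs $s_i=(-1)^{k_i}$ coming from the dominant terms of the characteristic thetas, so that the pattern $(\pm,\mp,\mp)$ forces $F_i=\min[F_{i+1},F_{i+2}]$. This matches your steps (i)--(iii), so the proposal is correct and essentially the same approach.
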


This theorem generalizes \cite[Theorem 2.4]{IT08}, 
where $C(f_{\ve})$ is a special hyperelliptic curve.
We introduce the following lemma for later convenience:

\begin{lemma}\label{lemma:theta-ch}\cite[Proposition 2.1]{IT08} 
  Let $C$ be a hyperelliptic curve of genus $g$ and take
  $\beta = (\beta_j)_j \in \frac{1}{2}\Z^g$.
  Set $\alpha = (\alpha_j)_j \in \frac{1}{2}\Z^g$ as
  $$ 
    \alpha_j = - \frac{1}{2} \delta_{j,i-1} + \frac{1}{2} \delta_{j,i},
  $$
  where $i$ is defined by the condition
  $\beta_j=0 ~(1\leq j \leq i-1)$ and $\beta_i\neq 0$ mod $\Z$.
  Then $(\alpha,\beta)$ is a non-singular odd theta characteristic 
  for $\Jac(C)$. 
\end{lemma}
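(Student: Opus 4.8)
The plan is to verify separately the two defining properties of $(\alpha,\beta)$ — that it is \emph{odd} and that it is \emph{non-singular} — using the classical combinatorial description of theta characteristics of a hyperelliptic curve in terms of subsets of its branch points.

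Oddness is immediate. A half-integer characteristic $(\alpha,\beta)\in(\tfrac12\Z^g)^2$ defines an odd theta characteristic exactly when $4\,\alpha\cdot\beta$ is an odd integer. Since $\alpha_j\neq 0$ only for $j\in\{i-1,i\}$, while $\beta_{i-1}=0$ (because $i-1<i$) and $\beta_i\equiv\tfrac12\pmod\Z$, we get $4\,\alpha\cdot\beta = 4\,\alpha_i\beta_i \equiv 1\pmod 2$ (the boundary case $i=1$ being the same, with only the $\alpha_i\beta_i$ term present). Hence $(\alpha,\beta)$ is odd. The substance is therefore non-singularity: that $h^0$ of the theta characteristic line bundle $\xi_{(\alpha,\beta)}$ on $C$ equals $1$, equivalently — by the Riemann singularity theorem — that the theta divisor of $\Jac(C)$ is smooth at the point determined by $(\alpha,\beta)$. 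Here I would fix a symplectic homology basis adapted to the hyperelliptic structure of $C$ (the normalization of Mumford's \emph{Tata Lectures on Theta II}, Chapter IIIa, which is also the one implicit in the references on the hyperelliptic spectral curves of $\mathcal T(1,N)$) and invoke the classical dictionary: theta characteristics of $C$ correspond, modulo complementation, to subsets $T$ of the $2g+2$ branch points of the double cover $C\to\mathbb{P}^1$ with $|T|\equiv g+1\pmod 2$, with $h^0(\xi_T)=\tfrac{g+1-|T|}{2}$ whenever $0\le|T|\le g-1$; in particular $(\alpha,\beta)$ is a non-singular odd theta characteristic precisely when the associated $T$ has $|T|=g-1$. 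So the task reduces to translating the characteristic of the lemma, via Mumford's explicit formulas for the half-periods of the Weierstrass points, into a branch-point subset and reading off its cardinality.

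The main obstacle is exactly this last bookkeeping. The characteristic in the lemma has a rigid shape: $\beta$ vanishes in its first $i-1$ slots, carries a half-integer in slot $i$, and is an \emph{arbitrary} half-integer vector beyond slot $i$, while $\alpha$ carries only the pattern $(-\tfrac12,\tfrac12)$ in slots $(i-1,i)$. I expect that unwinding the staircase formulas for the half-periods shows this is the class of a reduced divisor $P_{j_1}+\cdots+P_{j_{g-1}}$ of $g-1$ distinct Weierstrass points — the free tail of $\beta$ only permuting which Weierstrass points occur, and the sparse support of $\alpha$ at $(i-1,i)$ being what forbids a copy of the hyperelliptic pencil $g^1_2$ from entering the reduced class (which would lower $|T|$ below $g-1$ and produce an even, or a singular odd, characteristic). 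Proving that $|T|=g-1$ for \emph{every} admissible tail of $\beta$, and handling the case $i=1$ uniformly, is where the real work lies; once that is established, the Riemann singularity theorem finishes the proof, and Lemma \ref{lemma:theta-ch} follows.
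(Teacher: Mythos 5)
The paper itself contains no proof of this lemma: it is quoted from \cite[Proposition 2.1]{IT08}, so there is no internal argument to compare with line by line. Your outline does follow what is essentially the standard (and the cited) route: the parity criterion for half-integer characteristics, Mumford's dictionary between theta characteristics of a hyperelliptic curve and subsets $T$ of the $2g+2$ branch points with $|T|\equiv g+1 \pmod 2$ and $h^0=\frac{g+1-|T|}{2}$, and the Riemann singularity theorem to convert $h^0=1$ into non-singularity. Your oddness computation is correct, including the case $i=1$: only the slot $i$ contributes, and $4\alpha\cdot\beta=2\beta_i$ is odd because $\beta_i\equiv\frac12 \pmod \Z$.

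However, the non-singularity half is a plan, not a proof, and that half is the entire content of the lemma. You state the decisive step as an expectation (``I expect that unwinding the staircase formulas\dots'') and explicitly defer it (``where the real work lies''): namely, writing the half-period characteristics of the Weierstrass points in the chosen adapted homology basis, adding the Abel--Jacobi base-point shift and Riemann constant, and showing that for \emph{every} admissible tail $(\beta_{i+1},\dots,\beta_g)$ and also for $i=1$ the resulting subset $T$ has exactly $g-1$ elements. This is not a removable formality. For hyperelliptic curves of genus $g\ge 5$ singular odd theta characteristics exist --- for $g=5$ there are $2^{4}(2^{5}-1)=496$ odd characteristics but only $\binom{12}{4}=495$ with $h^0=1$, the missing one being $T=\emptyset$ with $h^0=3$ --- so oddness alone does not give non-singularity, and the lemma is precisely the assertion that the particular staircase shape of $(\alpha,\beta)$ never lands on a singular class. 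There is also a bookkeeping point you wave at but must actually pin down: ``non-singular odd'' is a statement about a fixed symplectic basis for $\Jac(C)$, so the Mumford normalization you invoke has to be matched with the basis used for the spectral curves here. Until the identification of $T$ and the count $|T|=g-1$ are carried out uniformly in $\beta$, the proposal does not establish the lemma.
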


\subsection{Tropical Jacobian}

When the positive definite symmetric  
matrix $B \in M_g(\R)$ is the period matrix of 
a smooth tropical curve $\Gamma$, 
the $g$-dimensional real torus $J(\Gamma)$ defined by
$$
  J(\Gamma) := \R^g / \Z^g B
$$
is called the tropical Jacobian \cite{MikhaZhar06} of $\Gamma$.

\section{Discrete and ultradiscrete generalized Toda lattice}
\subsection{Generalized discrete periodic Toda lattice $T(M,N)$}

Fix $M, N \in \Z_{>0}$.
Let $T(M,N)$ be the generalization of discrete periodic Toda lattice 
defined by the difference equations \cite{Iwao09,NagaiTokihiroSatsuma99}
\begin{align}
  \label{slN-dpToda}
  \begin{split}
  &I_n^{t+1} + V_{n-1}^{t+\frac{1}{M}} 
  = I_n^t + V_n^t, 
  \\
  &V_n^{t+\frac{1}{M}} I_n^{t+1} = I_{n+1}^t V_n^t,
  \end{split}
  \quad (n \in \Z /N \Z, ~t \in \Z/M),
\end{align}
on the phase space $T$:
\begin{align}\label{spaceT}
  \begin{split}
  &\Big\{(I_{n}^t, I_{n}^{t+\frac{1}{M}}, \ldots, I_n^{t+\frac{M-1}{M}}, V_n^t)_{n=1,  \ldots,N} \in \C^{(M+1)N} 
  \\ 
  & \qquad ~\Big|~ 
  \prod_{n=1}^N V_n^t, ~\prod_{n=1}^N I_n^{t+\frac{k}{M}} ~(k=0,\ldots,M-1) 
  \text{ are distinct } \Big\}.
  \end{split}
\end{align}
Eq. \eqref{slN-dpToda} is written in the Lax form given by
$$
  L^{t+1}(y) M^t(y) = M^t(y) L^t(y), 
$$
where
\begin{align}\label{MNLax} 
  &L^t(y) = M^t(y) R^{t+\frac{M-1}{M}}(y) \cdots R^{t+\frac{1}{M}}(y) R^t(y),
  \\
  \nonumber
  &R^t(y) = 
    \begin{pmatrix} 
      I_2^t & 1 & & &  \\
       & I_3^t & 1 & & \\
      & & \ddots & \ddots & \\
      & &  & I_{N}^t & 1 \\
      y& & & & I_1^t \\
    \end{pmatrix},
  \quad 
  M^t(y) = 
    \begin{pmatrix} 
      1 &  &  & & \frac{V_1^t}{y} \\
      V_2^t & 1   & & \\
      & V_3^t & 1 \\
      & & \ddots & \ddots  & \\
      & & & V_{N}^t & 1 \\
    \end{pmatrix}.
\end{align}
The Lax form ensures that the characteristic polynomial 
$\Det(L^t(y) - x \mathbb{I}_N)$ of the Lax matrix $L^t(y)$
is independent of $t$, namely, the coefficients of 
$\Det(L^t(y) - x \mathbb{I}_N)$ are integrals of motion of $T(M,N)$.

Assume $\gcd(M,N)=1$ and 
set $d_j = [\frac{(M+1-j)N}{M}] ~(j=1,\ldots,M)$.
We consider three spaces for $T(M,N)$: 
the phase space $T$ \eqref{spaceT}, 
the coordinate space $L$ for the Lax matrix $L^t(y)$ \eqref{MNLax},
and the space $F$ of the spectral curves.
The two spaces $L$ and $F$ are given by
\begin{align}
  &L = \{(a_{i,j}^t,b_i^t)_{i=1,\ldots,M, ~j=1,\dots,N} \in \C^{(M+1)N} \}, 
  \\
  \begin{split}\label{slN-dpF}
  &F = \Big\{ y^{M+1} + f_M(x) y^{M} + \cdots + f_1(x) y + f_{0} 
  \in \C[x,y] ~\Big|~
  \\
  & \qquad \qquad 
  \deg_x f_j(x) \leq d_j ~(j=1,\ldots, M),
  ~-f_1(x) \text{ is monic}
  \Big\},
  \end{split}
\end{align}
where each element in $L$ corresponds to the matrix:
\begin{align*}
  L^t(y) = 
      \begin{pmatrix} 
      a_{1,1}^t & a_{2,2}^t & \cdots & a_{M,M}^t & 1 &  & & \frac{b_N^t}{y}\\
      b_1^t & a_{1,2}^t & a_{2,3}^t & \cdots & a_{M,M+1}^t& 1 & \\
      & \ddots & \ddots & \ddots & & \ddots & \ddots \\[2mm]
      & & \ddots & & & & a_{M,N-1}^t & 1 \\
      y &  & & \ddots& & & a_{M-1,N-1}^t & a_{M,N}^t \\
      y a^t_{M,1} & y &  & & & & & a_{M-1,N}^t \\
      \vdots & \ddots & \ddots &  & & \ddots & \ddots & \vdots\\[2mm]
      y a^t_{2,1} & \cdots & y a^t_{M,M-1} & y & & & b_{N-1}^t & a_{1,N}^t \\
    \end{pmatrix}.
\end{align*}

Define two maps $\psi : T \to L$ and $\phi : L \to F$ given by
\begin{align*}
  &\psi((I_{n}^t, I_{n}^{t+\frac{1}{M}}, \ldots, I_n^{t+\frac{M-1}{M}}, V_n^t)_{n=1,  \ldots,N}) = L^t(y) 
  \\
  &\phi(L^t(y)) = (-1)^{N+1} y \Det(L^t(y) - x \mathbb{I}_N),
\end{align*}
Via the map $\psi$ (resp. $\phi \circ \psi$), we can regard 
$F$ as a set of polynomial functions on $L$ (resp. $T$). 
We write $n_F$ for the number of the polynomial functions in $F$,
which is $\displaystyle{n_F = \frac{1}{2}(M+1)(N+1)}$.

\begin{proposition}\label{prop:integrability}
  The $n_F$ functions in $F$ are functionally independent in $\C[T]$.
\end{proposition}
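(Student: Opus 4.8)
The plan is to reformulate functional independence as dominance of a morphism and then to verify the rank condition at a single, explicitly chosen point. Identify $F$ with the affine space $\C^{n_F}$ whose coordinates are the free coefficients of $f_0, f_1, \dots, f_M$ (subject only to $\deg_x f_j \leq d_j$ and to $-f_1$ being monic), and let $\Phi = \phi \circ \psi : T \to \C^{n_F}$ be the induced map; it is the restriction of a polynomial map defined on all of $\C^{(M+1)N}$. Since $T$ is an irreducible variety, the $n_F$ functions in $F$ are functionally independent in $\C[T]$ exactly when $\Phi$ is dominant, i.e. when the differential $d\Phi_p$ has rank $n_F$ at a generic $p$. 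The locus in $\C^{(M+1)N}$ on which $d\Phi$ has rank $n_F$ is Zariski-open, so if it is nonempty it is dense and meets the dense open subset $T$; hence it suffices to exhibit one point $p_0 \in \C^{(M+1)N}$ with $d\Phi_{p_0}$ of rank $n_F$, equivalently to produce $n_F$ of the coefficient functions whose leading terms, for a well-chosen grading of $\C[I_n^{t+k/M}, V_n^t]$, are algebraically independent.

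I would realize this by the method of initial forms. Using the factorization $L^t(y) = M^t(y) R^{t+\frac{M-1}{M}}(y)\cdots R^t(y)$, first write each entry $a_{i,j}^t, b_i^t$ of the banded Lax matrix as an explicit polynomial in the phase variables; each such entry is, up to terms of lower weight, a single monomial assembled from the $I$'s across the $M$ time slices (and from the $V$'s, for the $b$-entries). Next, fix a sufficiently generic weight vector on the variables $I_n^{t+k/M}, V_n^t$ so that (i) each entry of $L^t(y)$ has a unique monomial of strictly largest weight, and (ii) in the Leibniz expansion of $(-1)^{N+1} y\,\Det(L^t(y) - x\mathbb{I}_N)$ the coefficient of every monomial $x^a y^b$ is dominated by a single permutation term. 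One then reads off the leading term of each coefficient of each $f_j(x)$ as an explicit monomial in the $I$'s and $V$'s. Finally, verify that as $(a,b)$ ranges over the lattice points of the Newton polygon of the spectral curve indexing the $n_F$ free coefficients, these leading monomials are pairwise distinct and their exponent vectors span a rank-$n_F$ sublattice of $\Z^{(M+1)N}$: a tuple of polynomials whose leading terms are monomials with $\Q$-linearly independent exponent vectors is automatically algebraically independent, which is the assertion.

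The main difficulty is combinatorial, not conceptual: one must determine the Newton polygon of the spectral curve — this is where the hypothesis $\gcd(M,N)=1$ and the precise values $d_j = [\frac{(M+1-j)N}{M}]$ are used, together with the identity $\sum_{k=1}^{M-1}[\frac{kN}{M}] = \frac{(M-1)(N-1)}{2}$ which makes the number of free coefficients equal to $\frac{1}{2}(M+1)(N+1)$ — and then choose the weight vector so that the ``one dominant monomial per entry, one dominant permutation per coefficient'' property holds simultaneously and produces pairwise distinct, $\Q$-independent exponent vectors. As an alternative, more conceptual route (using \cite{Iwao09,vanMoerMum79}), one may instead invoke that the commuting flows of $T(M,N)$ linearize on the Jacobian of the generic smooth spectral curve $C(f)$, which has genus $g = \frac{1}{2}(M+1)(N-1)$; then a generic fibre of $\Phi$ contains — and on each symplectic leaf coincides with — this $g$-dimensional abelian variety, so that $\dim\overline{\Phi(T)} = \dim T - g = (M+1)N - \frac{1}{2}(M+1)(N-1) = n_F$ and $\Phi$ is dominant. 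I would present the initial-form argument as the proof and record the linearization argument as a remark.
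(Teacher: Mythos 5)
Your primary route is a programme rather than a proof: everything hinges on producing a weight vector for which every entry of $L^t(y)$ and every coefficient of $(-1)^{N+1}y\,\Det(L^t(y)-x\mathbb{I}_N)$ has a unique dominant monomial, and for which the resulting $n_F$ exponent vectors in $\Z^{(M+1)N}$ are $\Q$-linearly independent. You never exhibit such a weight vector nor verify either property for this family; you yourself flag this as ``the main difficulty''. Since the reduction to dominance of $\phi\circ\psi$ and the criterion ``monomial leading terms with independent exponents imply algebraic independence'' are routine, that unperformed combinatorial verification \emph{is} the proposition, so as written there is a genuine gap. It is not a formality either: the coefficient of a fixed $x^ay^b$ mixes $I$'s from all $M$ time slices with $V$'s, and one generic order must work simultaneously for all $\frac12(M+1)(N+1)$ coefficients.

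Your fallback remark is essentially the paper's actual proof, but it omits the step the paper devotes its lemma to. The van Moerbeke--Mumford theorem describes fibres of $\phi$ on the Lax space $L$ (generic fibre is the affine part of $\Jac(C_f)$, of dimension $g$, with $\dim L=n_F+g$), not fibres of $\phi\circ\psi$ on $T$; to convert this into independence in $\C[T]$ one must know $\psi:T\to L$ is dominant. The paper proves this by an explicit computation: the Jacobian matrix of $\psi$ block-diagonalizes into matrices $P(J,K)$ with $\Det P(J,K)=\prod_n J_n-\prod_n K_n$, so the Jacobian equals, up to sign, $\prod_{0\le k\le M-1}(I^{t+\frac{k}{M}}-V^t)\cdot\prod_{0\le j<k\le M-1}(I^{t+\frac{k}{M}}-I^{t+\frac{j}{M}})$, which is nonzero precisely because of the distinctness conditions built into the phase space $T$. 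Your version instead asserts that a generic fibre of $\phi\circ\psi$ ``contains --- and on each symplectic leaf coincides with'' the $g$-dimensional Jacobian: the containment gives only a lower bound on fibre dimension, hence the inequality $\dim\overline{(\phi\circ\psi)(T)}\le n_F$, which is the wrong direction, and the ``coincides'' half is exactly the unproved dominance of $\psi$ in disguise. Supplying the nonvanishing-Jacobian lemma for $\psi$ (or any argument that $\psi(T)$ is dense in $L$) turns your remark into the paper's proof; without it, both of your routes are incomplete.
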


To prove this proposition we use the following:
\begin{lemma}\label{lemma:psi}
  Define
  $$
  I^{t+\frac{k}{M}} = \prod_{n=1}^N 
     I_n^{t+\frac{k}{M}}, 
  \quad
  V^t = \prod_{n=1}^N V_n^t ~~(t \in \Z, ~ k=0,\ldots,M-1).
  $$
  The Jacobian of $\psi$ does not vanish iff
  $I^{t+\frac{k}{M}} \neq I^{t+\frac{j}{M}}$ for $0 \leq k < j \leq M-1$
  and $I^{t+\frac{k}{M}} \neq V^t$ for $0 \leq k \leq M-1$.  
\end{lemma}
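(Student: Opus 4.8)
The plan is to use that $\dim T = \dim L = (M+1)N$ (compare \eqref{spaceT} and \eqref{MNLax}), so the Jacobian of $\psi$ is the determinant of the linear map $d\psi$ on tangent spaces, and its nonvanishing is equivalent to injectivity of $d\psi$. First I would record the determinants of the elementary factors in \eqref{MNLax}: expanding along the unique nontrivial cyclic permutation gives $\det R^{t+\frac{k}{M}}(y) = I^{t+\frac{k}{M}} + (-1)^{N+1} y$ and $\det M^t(y) = 1 + (-1)^{N+1} V^t/y$, so that $\det L^t(y) = (1 + (-1)^{N+1}V^t/y)\prod_{k=0}^{M-1}(I^{t+\frac{k}{M}} + (-1)^{N+1}y)$. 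I then introduce the $M+1$ \emph{markers} $y_k^\ast := (-1)^N I^{t+\frac{k}{M}}$ $(k=0,\dots,M-1)$ and $y_M^\ast := (-1)^N V^t$, the points where the respective factor drops rank. The distinctness hypotheses of the lemma say precisely that these $M+1$ markers are pairwise distinct, so the goal becomes: $d\psi$ is injective iff the markers are pairwise distinct.

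Next I would linearise the defining relation $L^t(y) = M^t(y)\,P(y)$ with $P = R^{t+\frac{M-1}{M}}(y)\cdots R^{t}(y)$. A tangent vector in $\ker d\psi$ is a variation $(\delta I^{t+\frac{k}{M}}_n,\delta V^t_n)$ with $\delta L^t = 0$, i.e.\ $\delta M^t\,P + M^t\,\delta P = 0$, which I rewrite as $(M^t)^{-1}\delta M^t = -\sum_{k=0}^{M-1} Q_k\,(\delta R^{t+\frac{k}{M}})\,(R^{t+\frac{k}{M}})^{-1}\,Q_k^{-1}$, where $Q_k = R^{t+\frac{M-1}{M}}(y)\cdots R^{t+\frac{k+1}{M}}(y)$. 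Here each $\delta R^{t+\frac{k}{M}}$ is the diagonal matrix of the $I$-variations while $\delta M^t$ carries the $V$-variations. The key structural observation is that, viewed as meromorphic matrix functions of $y$, the $k$-th summand on the right has a simple pole exactly at $y_k^\ast$ (the simple zero of $\det R^{t+\frac{k}{M}}$), whereas the left-hand side has a pole only at $y_M^\ast$.

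The heart of the argument is a residue extraction at each marker $y_k^\ast$. Because $I^{t+\frac{k}{M}}\neq V^t$, the left side is regular at $y_k^\ast$; because the $I^{t+\frac{j}{M}}$ are pairwise distinct, the remaining summands are regular there and moreover $\det Q_k(y_k^\ast) = \prod_{j>k}(I^{t+\frac{j}{M}} - I^{t+\frac{k}{M}}) \neq 0$. Since $R^{t+\frac{k}{M}}(y_k^\ast)$ has rank $N-1$ with one-dimensional kernel spanned by a vector $u_k$ all of whose entries are nonzero (a cyclic recursion with nonzero coefficients), taking the residue forces $Q_k(\delta R^{t+\frac{k}{M}})u_k = 0$, and invertibility of $Q_k(y_k^\ast)$ together with the diagonality of $\delta R^{t+\frac{k}{M}}$ gives $\delta I^{t+\frac{k}{M}}_n = 0$ for all $n$. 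Thus $\delta P = 0$, whence $\delta M^t = 0$ and all $\delta V^t_n = 0$, so $d\psi$ is injective. For the converse, when two markers coincide this separation breaks and one constructs an explicit nonzero element of $\ker d\psi$; equivalently, one verifies that $\det d\psi$ equals, up to a nonvanishing monomial in the $I$'s and $V$'s, the product $\prod_{0\le k<j\le M-1}(I^{t+\frac{k}{M}} - I^{t+\frac{j}{M}})\prod_{k=0}^{M-1}(I^{t+\frac{k}{M}} - V^t)$, from which both directions follow at once. I expect the main obstacle to be the rank-one residue analysis at the markers---establishing that the kernel vectors have no vanishing entry and that the simple-pole structure is exact---and, for the converse, either the explicit kernel construction or the clean identification of $\det d\psi$ with the displayed product up to units.
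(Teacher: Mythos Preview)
Your residue approach is genuinely different from the paper's proof, which proceeds by a direct linear--algebra computation: the paper block--diagonalises the $(M+1)N\times(M+1)N$ Jacobian matrix of $\psi$ into $M+1$ blocks of size $N\times N$, each a product of bidiagonal ``cyclic'' matrices $P(J,K)$ with $\det P(J,K)=\prod_n J_n-\prod_n K_n$, yielding the closed formula
\[
\det d\psi=\pm\prod_{0\le k\le M-1}\bigl(I^{t+\frac{k}{M}}-V^t\bigr)\cdot\prod_{0\le j<k\le M-1}\bigl(I^{t+\frac{k}{M}}-I^{t+\frac{j}{M}}\bigr),
\]
from which both implications are immediate. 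Your argument, by contrast, exploits the dependence of $L^t$ on the spectral parameter $y$ and extracts residues of $(M^t)^{-1}\delta M^t=-\sum_k Q_k\,\delta R^{(k)}\,(R^{(k)})^{-1}Q_k^{-1}$ at the markers. This is an appealing, more geometric line.

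There is, however, a genuine slip in the pole bookkeeping. The $k$-th summand does \emph{not} have a pole only at $y_k^\ast$: since $Q_k^{-1}$ contains the factors $(R^{(j)})^{-1}$ for $j>k$, the $k$-th summand also has simple poles at $y_j^\ast$ for every $j>k$. Consequently, at a given $y_\ell^\ast$ the summands with $k<\ell$ are \emph{not} regular, and your residue step as written does not isolate $\delta R^{(\ell)}$. This is fixable: run the argument inductively starting at $\ell=0$, where no $k<\ell$ exists; one finds $Q_0(y_0^\ast)\,\delta R^{(0)}u_0=0$, hence $\delta R^{(0)}=0$, and then proceeds upward. A second point to watch is that your claim ``$u_k$ has no vanishing entry'' requires all individual $I_n^{t+k/M}\ne 0$, which is not part of the hypothesis; the paper's determinant computation is insensitive to this.

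For the converse you do not actually give an argument: you either promise an explicit kernel vector without constructing it, or you invoke the determinant identity, which is precisely the paper's method and is not a consequence of your residue analysis. So as it stands your sketch proves one direction (after the inductive repair and a density argument for the $I_n\neq 0$ issue), while the paper's direct block--diagonalisation gives the exact Jacobian and hence both directions at once, with no extra genericity needed.
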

\begin{proof}
  Since the dimensions of $T$ and $L$ are same,
  the Jacobian matrix of $\psi$ becomes an $(M+1)N$ by $(M+1)N$ matrix.
  By using elementary transformation, one sees that the Jacobian matrix is 
  block diagonalized into $M+1$ matrices of $N$ by $N$,
  and the Jacobian is factorized as
  $$
    \pm \Det B \cdot \prod_{k=1}^{M-1} \Det A^{(k)},  
  $$
  where $A^{(k)}$ and $B$ are 
\begin{align*}
  &A^{(k)} = P(I^{t+\frac{k}{M}}, I^{t+\frac{k-1}{M}}) 
             P(I^{t+\frac{k+1}{M}}, I^{t+\frac{k-1}{M}}) \cdots
             P(I^{t+\frac{M-1}{M}}, I^{t+\frac{k-1}{M}}) 
  \\
  &\hspace{8cm} (k=1,\ldots,M-1), 
  \\
  &B = P(I^{t+\frac{M-1}{M}},V^t) P(I^{t+\frac{M-2}{M}},V^t) \cdots
       P(I^{t+\frac{1}{M}},V^t) P(I^{t},V^t),     
  \\[1mm]
  &P(J,K) = \begin{pmatrix}
              J_1 & & & & -K_N \\[1mm]
              -K_1 & J_2 & & & \\
              & \ddots & \ddots & & \\
              & & \ddots & \ddots & \\[1mm]
              & & & -K_{N-1} & J_N
            \end{pmatrix} \in M_N(\C).
\end{align*}
  Since 
  $\Det P(J,K) = \prod_{n=1}^N J_n - \prod_{n=1}^N K_n$,
  we obtain
  $$
  \Det B \cdot \prod_{k=1}^{M-1} \Det A^{(k)}
  = \prod_{0 \leq k \leq M-1} (I^{t+\frac{k}{M}}-V^t) \cdot
    \prod_{0 \leq j < k \leq M-1} (I^{t+\frac{k}{M}}-I^{t+\frac{j}{M}}).
  $$  
  Thus the claim follows.
\end{proof}

\begin{remark}
  The above lemma is true for $\gcd(M,N) > 1$, too.
\end{remark}

\begin{proof}(Proposition \ref{prop:integrability}) \\
Take a generic $f \in F$ such that the algebraic curve $C_f$ given by
$f=0$ is smooth. The genus $g$ of $C_f$ is $\frac{1}{2} (N-1)(M+1)$,
and we have $\dim L = n_F + g$.
Due to the result by Mumford and van Moerbeke
\cite[Theorem 1]{vanMoerMum79},
the isolevel set $\phi^{-1}(f)$ is isomorphic to the affine part of   
the Jacobian variety $\mathrm{Jac}(C_f)$ of $C_f$,
which denotes $\dim_{\C} \phi^{-1}(f) = g$.
Thus $F$ has to be a set of $n_F$ functionally 
independent polynomials in $\C[L]$.
Then the claim follows from Lemma \ref{lemma:psi}.

\end{proof}

\subsection{Generalized ultradiscrete periodic Toda lattice $\mT(M,N)$}

We consider the difference equation 
\eqref{slN-dpToda} on the phase space $T_{\ve}$:
\begin{align*}
  &\Big\{ (I_{n}^t, I_{n}^{t+\frac{1}{M}}, \ldots, 
  I_n^{t+\frac{M-1}{M}}, V_n^t)_{n=1,  \ldots,N} \in K_{\ve}^{N(M+1)} 
  \\
  & \qquad \qquad ~\Big|~
    \val \bigl(\prod_{n=1}^N I_n^{t+\frac{k}{M}}\bigr) 
  < \val \bigl(\prod_{n=1}^N V_n^{t}\bigr) 
  ~(k=0,\ldots,M-1),
    \\
  & \qquad \quad \qquad
  \val \bigl(\prod_{n=1}^N I_n^{t+\frac{k}{M}}) ~(k=0,\ldots,M-1) 
  \text{ are distinct }
  \Big\}.
\end{align*}
We assume $\gcd(M,N) = 1$. 
Let $F_{\ve} \subset K_{\ve}[x,y]$ be the set of polynomials 
over $K_{\ve}$ defined by the similar formula to \eqref{slN-dpF}, \textit{i.e.}
\[
F_{\ve} = 
\Big\{ y^{M+1}+\sum_{j=0}^{M}{f_j(x)y^j}
  \in K_{\ve}[x,y] ~\Big|~
  \deg_x f_j \leq d_j, ~
  -f_1(x) \text{ is monic}
  \Big\}.
\]

The {\em tropicalization} of the above system becomes
the generalized ultradiscrete periodic Toda lattice 
$\mT(M,N)$, which is the piecewise-linear map: 
\begin{align}
    \begin{split}\label{ud-pToda}
    &Q_n^{t+1} = \min[W_n^t, Q_n^t - X_n^t], \\
    &W_n^{t+\frac{1}{M}} = Q_{n+1}^t + W_n^t - Q_n^{t+1},
    \end{split}
    \quad (n \in \Z/N \Z, ~t \in \Z/M),
    \\ \nonumber
    &\text{where }
    X_n^t = \min_{k=0,\ldots,N-1}[\sum_{j=1}^k (W_{n-j}^t-Q_{n-j}^t)],
\end{align}
on the phase space $\mT$:
\begin{align*}
  \mT &= 
  \Big\{(Q_{n}^t, Q_{n}^{t+\frac{1}{M}}, \ldots, Q_n^{t+\frac{M-1}{M}}, W_n^t)_{n=1,\ldots,N} \in \R^{(M+1)N}
  \\
  & \qquad \qquad
  ~\Big|~ \sum_n Q_n^{t+\frac{k}{M}} < \sum_n W_n^{t} ~ (k=0,\ldots,M-1), 
  \\
  & \qquad \quad \qquad
   \sum_n Q_n^{t+\frac{k}{M}} ~(k=0,\ldots,M-1) 
   \text{ are distinct } \Big\}.
\end{align*}
Here we set $\val(I_n^t) = Q_n^t$ and $\val(V_n^t) = W_n^t$.
The tropicalization of $F_{\ve}$ becomes the space of tropical polynomials
on $\mT$:
\begin{align}\label{slM-udF}
  \begin{split}
  \mathcal{F} =
  &\Bigl\{
  \min \Bigl[ (M+1) Y, ~
  \min_{j=1,\ldots,M} 
  \bigl[jY + \min[d_j X + F_{j,d_j},\ldots, 
  \\
  & \hspace{2.5cm}
   X+F_{j,1},F_{j,0}]\bigr],
   ~ F_0 \Bigr] ~\Big|~ F_{1,d_1} = 0, ~F_{j,i},
  F_0 \in \R 
 \Bigr\}.
  \end{split}
\end{align}
We write $\Phi$ for the map $\mT \to \mF$.

\subsection{Spectral curves for $T(M,N)$ and good tropicalization}

We continuously assume $\gcd(M,N)=1$. 

\begin{proposition}\label{prop:MNToda-curve}
For a generic point $\tau \in T_{\ve}$, 
that is a point in a certain Zariski open subset of $T_{\ve}$, 
the spectral curve $\phi \circ \psi(\tau)$ has a good tropicalization.
\end{proposition}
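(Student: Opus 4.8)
The plan is to verify the two conditions of Definition \ref{def2.1} for the spectral curve $f_\ve := \phi\circ\psi(\tau)$ at a generic $\tau\in T_\ve$, exploiting the fact that being a ``good tropicalization'' is a Zariski-open condition once we know it holds at one point (or on a dense set). First I would make the Newton polygon of $f_\ve$ explicit: by the shape of the Lax matrix $L^t(y)$ and the degree bounds $\deg_x f_j\le d_j=[\tfrac{(M+1-j)N}{M}]$, the Newton polygon $\Delta$ of a generic element of $F_\ve$ is the lattice polygon with vertices coming from the monomials $y^{M+1}$, $f_0$ (constant), and $f_1(x)$ monic of degree $d_1=N$, together with the slanted upper boundary determined by the $d_j$. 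Since $\gcd(M,N)=1$, I would check that this $\Delta$ is a \emph{smooth} lattice polygon in the sense that every lattice point on each edge is a vertex subdivision point and each vertex is unimodular; this is exactly where the coprimality hypothesis enters, and it is what forces the generic tropical curve $TV(f_\ve)$ to be smooth and of the expected genus $g=\tfrac12(N-1)(M+1)=\sharp(\mathrm{int}\,\Delta\cap\Z^2)$.

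Next I would address condition (1), irreducibility, reducedness and non-singularity of $C(f_\ve)$ over $K_\ve$. Here I would use Proposition \ref{prop:integrability}: the $n_F$ coefficient functions in $F$ are functionally independent on $T$ (equivalently on $L$ via $\psi$, which has non-vanishing Jacobian on the open set described in Lemma \ref{lemma:psi}), so the image of $\phi\circ\psi$ is Zariski-dense in $F$. Since the locus in $F$ of polynomials defining a smooth, irreducible, geometrically reduced complete curve $C(f)$ of genus $g$ is Zariski-open and nonempty (it is nonempty over $\C$ by the Mumford--van Moerbeke description of the generic fibre, and the same statement holds after base change to the Puiseux field $K_\ve$, which is of characteristic zero), its preimage under $\phi\circ\psi$ is a nonempty Zariski-open subset $U_1\subset T_\ve$. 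For $\tau\in U_1$, condition (1) holds.

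For condition (2) I would argue combinatorially on the Newton polygon. For $P\in TV(f_\ve)$, the polynomial $f_\ve^P$ is the sum of the monomials of $f_\ve$ whose valuations realize the minimum at $P$; its Newton polygon is the cell $\Delta^P$ of the dual subdivision of $\Delta$ induced by the tropicalization. When $TV(f_\ve)$ is smooth, every such $\Delta^P$ is either a unimodular triangle (for $P$ a vertex of $\Gamma$) or a primitive segment (for $P$ in the interior of an edge). In either case the generic such truncated polynomial defines a smooth reduced curve in $(K_\ve^\times)^2$ — for a unimodular triangle $ax^{a_1}y^{a_2}+bx^{b_1}y^{b_2}+cx^{c_1}y^{c_2}$ with $abc\ne0$ the affine curve is smooth (in fact a translate of a line), and for a primitive segment $ax^{a_1}y^{a_2}+bx^{b_1}y^{b_2}$ with $ab\ne0$ it is a disjoint union of smooth curves (translates of a single monomial curve). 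The only failure modes are vanishing of one of these leading coefficients or a coincidence among valuations producing a non-unimodular cell; both are cut out by finitely many nontrivial conditions on the coefficients $a_w(\ve)$, hence again describe a proper closed subset. Intersecting with $U_1$ gives the desired Zariski-open $U\subset T_\ve$.

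The main obstacle I expect is \emph{non-emptiness}: one must exhibit, or argue the existence of, at least one $\tau\in T_\ve$ for which all these open conditions simultaneously hold — in particular that the tropicalization of some spectral curve is a \emph{smooth} tropical curve of the full genus $g$, rather than degenerating into a non-unimodular subdivision or dropping genus. The cleanest route is to produce a single explicit Puiseux-coefficient choice: take the $I_n^{t+k/M}$ and $V_n^t$ with generic valuations subject to the constraints defining $T_\ve$, so that the induced piecewise-linear tropical spectral curve in $\mF$ is a generic member (its coefficients $F_{j,i},F_0$ being affine-linearly independent in the valuations, which follows from the structure in \eqref{slM-udF} together with $\gcd(M,N)=1$), and then invoke Theorem \ref{Trop-period} to transfer smoothness and the genus equality between $TV(f_\ve)$ and $C(f_{\bar\ve})$. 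Once one such $\tau$ is in hand, openness does the rest.
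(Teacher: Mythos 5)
There is a genuine gap in your treatment of condition (2) of Definition \ref{def2.1}, coming from conflating Zariski-genericity over $K_{\ve}$ with genericity of the \emph{valuations}. The proposition asserts a good tropicalization for every $\tau$ in a Zariski open subset of $T_{\ve}$, and any nonempty Zariski open subset contains points realizing every admissible valuation pattern; in particular it contains points whose tropical spectral curve is \emph{not} smooth (non-unimodular dual subdivision, high-valence vertices). Your verification of (2) only covers smooth tropicalizations, where each truncation $f_{\ve}^P$ is a primitive binomial or a unimodular trinomial, and your claim that ``a coincidence among valuations producing a non-unimodular cell'' is cut out by algebraic conditions on the coefficients is not true: a valuation coincidence such as $\val(a_w)=\val(a_{w'})$ is not a Zariski-closed condition, and the locus of $f_{\ve}\in F_{\ve}$ with non-smooth tropicalization is Zariski dense (e.g.\ all coefficients of valuation $0$ give a one-vertex fan). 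So the set you construct is not Zariski open, and the cases that actually have to be handled --- truncations $f_{\ve}^P$ with many monomials --- are untouched. Relatedly, invoking Theorem \ref{Trop-period} to ``transfer smoothness'' is circular, since that theorem takes a good tropicalization as a hypothesis.

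The paper's proof works with arbitrary truncations: since $f_{\ve}^P$ is constant along each edge, only finitely many truncations occur, and for each possible support the condition that $C^0(f_{\ve}^P)$ be singular or non-reduced in $(K_{\ve}^\times)^2$ is algebraic in the coefficients; the real issue is to show these algebraic subsets of $F_{\ve}$ are \emph{proper}. This is delicate precisely because two coefficients are pinned in $F_{\ve}$ (the coefficient of $y^{M+1}$ is $1$ and $-f_1$ is monic, so the coefficient of $x^N y$ is $-1$), so one cannot genericize a truncation whose support contains $y^{M+1}-x^Ny$. That is exactly what Lemma \ref{lemma:MN} supplies: via the monomial substitution $(u,v)=(x^N/y^M,\,x^a/y^b)$ with $Ma-Nb=1$ (this is where $\gcd(M,N)=1$ enters) it shows $y^{M+1}-x^Ny+tl$ is non-singular in the torus for all but finitely many $t$, giving properness also in the pinned case; Proposition \ref{prop:integrability} then pulls the bad locus back to a positive-codimension subset of $T_{\ve}$. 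Your proposal flags a ``non-emptiness'' obstacle, but locates it in the wrong place (hitting a smooth tropicalization) and does not supply the analogue of Lemma \ref{lemma:MN}, which is the actual crux of the paper's argument; your treatment of condition (1) via Proposition \ref{prop:integrability} and the Mumford--van Moerbeke description does agree with the paper.
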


To show this proposition, we use the following lemma:

\begin{lemma}\label{lemma:MN}
  Fix $l \in K_{\ve}[x,y]$ and set $h_t = y^{M+1} - x^N y + tl$,
  where $t\in K_{\ve}$.
  Then $C^0(h_t)$ is non-singular in $(K_{\ve}^\times)^2$ except for
  finitely many $t$. 
\end{lemma}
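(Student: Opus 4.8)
The plan is to regard $\{h_t\}_t$ as the pencil of affine plane curves through the fixed curve $h_0=y^{M+1}-x^Ny=y(y^M-x^N)$ with direction $l$, and to run the standard incidence-variety argument; what makes this cheap is that $h_0$ is \emph{already} non-singular inside the torus. Since (non)singularity is geometric, I would first pass to the algebraic closure $\bar K:=\overline{K_\ve}$ and work in $T:=(\bar K^\times)^2$: smoothness of $C^0(h_t)$ in $(K_\ve^\times)^2$ is equivalent to smoothness of its base change in $T$, and a finite set of bad parameters over $\bar K$ cuts out a finite set of bad $t\in K_\ve$.

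First I would compute $\partial_x h_0=-Nx^{N-1}y$, which is a unit on $T$. Two consequences: (i) $\nabla h_0$ is nowhere zero on $T$, so $h_0$ itself has no singular point there; and (ii) for every $P\in T$ the equation $\partial_x h_t(P)=0$, read as an affine-linear equation in $t$, has a nonzero constant term, hence at most one solution. Let $Z=\{(P,t)\in T\times\mathbb{A}^1\mid h_t(P)=\partial_x h_t(P)=\partial_y h_t(P)=0\}$ be the incidence set of singular points; then the set $D$ of parameters $t$ for which $C^0(h_t)$ is singular in $T$ is the image of $Z$ under the projection $\pi_t\colon Z\to\mathbb{A}^1$. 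Being constructible in $\mathbb{A}^1$, $D$ is finite or cofinite, so it is enough to rule out the cofinite (equivalently, dominant) case. By (ii) the other projection $\pi_1\colon Z\to T$ is injective, so $Z$ has finitely many irreducible components, each mapped isomorphically by $\pi_1$ onto its image.

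It then remains to show $t$ is constant on each irreducible component $V$ of $Z$. This is clear if $\dim V=0$. If $\dim V\geq 1$, I would differentiate the identity $h_t(P)\equiv 0$ along $V$: since $\partial_x h_t=\partial_y h_t=0$ on $Z$ and $\partial_t h_t=l$, this gives $l(P)\,dt\equiv 0$ on $V$, so either $t$ is constant on $V$ or $l$ vanishes identically on $\pi_1(V)$. In the second case $h_t(P)=h_0(P)+t\,l(P)$ forces $h_0\equiv 0$ on $\pi_1(V)$ too, so $\pi_1(V)\subseteq\{h_0=0\}\cap T=\{y^M=x^N\}$; since $\gcd(M,N)=1$ the polynomial $y^M-x^N$ is irreducible over $\bar K$, hence $\pi_1(V)$ is dense in that curve and $y^M-x^N$ divides $l$. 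This last alternative is exactly the fixed-component degeneracy of the pencil, and it does not occur in the setting where the lemma is used: there $l$ is a nonzero tangent direction to $F_\ve$, so its $y^0$-coefficient is a scalar, and a one-line degree count then shows $y^M-x^N\nmid l$ (indeed $h_0$ is the only member of $F_\ve$ divisible by $y^M-x^N$). Ruling this out, $t$ is constant on each component of $Z$, so $D=\pi_t(Z)$ is finite, which is the assertion.

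The step I expect to be the real obstacle is this last one — excluding positive-dimensional components of $Z$ on which $t$ varies, equivalently showing the pencil $h_0+tl$ has no fixed component meeting $T$, i.e.\ $y^M-x^N\nmid l$. For a genuinely arbitrary $l$ the statement is false (with $l=y^M-x^N$ one gets $h_t=(y+t)(y^M-x^N)$, which is nodal for every $t\neq 0$), so any proof must use, as this one does, that $l$ is of the shape forced by $F_\ve$.
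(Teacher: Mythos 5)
Your proof is correct, but it is not the paper's route, and in one respect it is sharper than the paper's own treatment. The paper proves Lemma \ref{lemma:MN} by using $\gcd(M,N)=1$ to choose $a,b$ with $Ma-Nb=1$ and applying the monomial torus automorphism $(x,y)\mapsto (u,v)=(x^N/y^M,\,x^a/y^b)$, under which $h_t/y^{M+1}$ becomes $1-u+t\tilde{l}$ with $\tilde{l}$ a Laurent polynomial; it then invokes a Bertini-type Claim (stated without proof): if $C^0(f)$ is non-singular and $f,g$ are coprime, then $C^0(f+tg)$ is singular for only finitely many $t$. You stay in the original coordinates, replace the coordinate change by the single observation that $\partial_x h_0=-Nx^{N-1}y$ is a unit on the torus (so $h_0$ is smooth there and each point of the torus lies over at most one bad $t$), and prove the finiteness directly by differentiating along irreducible components of the incidence set; $\gcd(M,N)=1$ enters only through the irreducibility of $y^M-x^N$, and your argument is in effect a proof of the paper's unproved Claim in the case at hand. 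What your route buys is that it makes explicit a hypothesis the paper leaves unverified: the coprimality assumption in the Claim is exactly your no-fixed-component condition $y^M-x^N\nmid l$, and the paper never checks that $1-u$ does not divide $\tilde{l}$ — indeed it can, and your example $l=y^M-x^N$, $h_t=(y+t)(y^M-x^N)$, shows the lemma is false for arbitrary $l\in K_{\ve}[x,y]$ as stated, so some such condition is genuinely needed. Your degree count (a direction tangent to $F_{\ve}$ has constant $y^0$-part and no $x^ky^{M+1}$ or $x^Ny$ terms, hence is divisible by $y^M-x^N$ only if it vanishes; equivalently $h_0$ is the only member of $F_{\ve}$ divisible by $y^M-x^N$) supplies the verification appropriate to the way the lemma is used in the proof of Proposition \ref{prop:MNToda-curve}. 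Two harmless quibbles: injectivity of the projection of the incidence set to the torus gives finite fibres and hence equality of dimensions, not literally an isomorphism onto the image (which is all you use); and the step from $l\,dt\equiv 0$ to the dichotomy should be run on the smooth locus of each irreducible component, using characteristic zero, to conclude that $t$ is constant there and hence on the whole component.
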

\begin{proof}  
  Fix $a,b \in \Z$ as $Ma-Nb=1$.
  (It is always possible since $\gcd(M,N)=1$.) 
  Then the map $\nu: (K_{\ve}^\times)^2 \to (K_{\ve}^\times)^2; 
  ~(x,y) \mapsto (u, v) = (x^N/y^M, x^a/y^b)$ is holomorphic.
  The push forward of $h_t/y^{M+1}$ by $\nu$ becomes  
  $$ 
    \tilde{h} := (h_t/y^{M+1})_\ast = (1-u+t\tilde{l}) 
    \quad (\tilde{l} \in K_{\ve}[u,v,u^{-1},v^{-1}]).
  $$ 
  By using the following claim, we see that $C^0(\tilde{h})$ is non-singular.
  \begin{claim}
  Fix $f,g \in K_{\ve}[u,v]$ such that $C^0(f)$ is non-singular,
  and $f$ and $g$ are coprime to each other.
  Define 
  $$
    U = \{ t \in K_{\ve} ~|~ C^0(f+tg) \text{ is singular }\} \subset K_{\ve}.
  $$
  Then $U$ is a finite algebraic set.
  \end{claim}
  Then the lemma follows. 
\end{proof}

\begin{proof}(Proposition \ref{prop:MNToda-curve})
\\
Recall the definition of good tropicalization (Definition \ref{def2.1}).

The part (1) follows from Proposition \ref{prop:integrability} immediately.

The part (2): 
For any $f_{\ve} \in F_{\ve}$, it can be easily checked that if 
two points
$P_1,P_2\in TV(f_{\ve})$ exist on a same 
edge of the tropical curve, then $f_{\ve}^{P_1}=f_{\ve}^{P_2}$.
This fact implies that the set $\{f_{\ve}^P\,\vert\, P\in TV(f_{\ve})\}$ is finite.
Therefore, the set
\begin{align*}
\bigtriangleup=\{f_{\ve}\in F_{\ve}\,\vert\, C^{0}(f^P_{\ve}) 
&\mbox{ is non-reduced or singular in $(K_{\ve}^\times)^2$} 
\\
& \hspace{3cm}\mbox{ for some } P\in TV(f_{\ve}) \}
\end{align*}
is a union of finitely many
non-trivial algebraic subsets of $F_{\ve}\simeq K_{\ve}^{n_F}$.
Using Proposition \ref{prop:integrability} 
(with the map $\iota$ with any $\bar{\ve} \in \R_{>0}$)
and Lemma \ref{lemma:MN},
we conclude that $(\phi\circ\psi)^{-1}(\bigtriangleup)\subset T_{\ve}$
is an analytic subset with positive codimension.
(We need Lemma \ref{lemma:MN} when $f_{\ve}^P$ includes $y^{M+1} - x^N y$.) 
\end{proof}

\section{General solutions to $\mathcal{T}(M,N)$}
\subsection{Bilinear equation}

In the following we use a notation $[t] = t ~{\rm mod}~ 1$
for $t \in \frac{\Z}{M}$.
The following proposition gives the bilinear form for 
$\mT(M,N)$:
\begin{proposition}
Let $\{T_n^t \}_{n \in \Z; t \in \frac{\Z}{M}}$ be a set of 
functions with a quasi-periodicity,
$T_{n+N}^t = T_n^t + (an + bt +c)$
for some $a,b,c \in \R$. 
Fix $\delta^{[t]}, \theta^{[t]} \in \R$ such that 
$$ 
  (a) \text{ $\delta^{[t]} + \theta^{[t]}$ does not depend on $t$}, 
  \qquad 
  (b) ~2b-a < N \theta^{[t]} \text{ for } t \in \Z/M.
$$ 
Assume $T_n^t$ satisfies
\begin{align}\label{tau-function}
  T_n^t + T_n^{t+1+\frac{1}{M}} =
  \min[ T_n^{t+1} + T_n^{t+\frac{1}{M}}, 
       T_{n-1}^{t+1+\frac{1}{M}} + T_{n+1}^t + \theta^{[t]}].
\end{align}
Then $T_n^t$ gives a solution to \eqref{ud-pToda}
via the transformation:
\begin{align}\label{QW-T}
  \begin{split}
  &Q_n^t = T_{n-1}^{t} + T_{n}^{t+\frac{1}{M}}
           - T_{n-1}^{t+\frac{1}{M}} - T_{n}^{t}+\delta^{[t]},
  \\
  &W_n^t = T_{n-1}^{t+1} + T_{n+1}^t-
          T_{n}^{t} - T_{n}^{t+1} + \delta^{[t]}+\theta^{[t]}.
  \end{split}
\end{align}
\end{proposition}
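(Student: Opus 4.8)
The plan is to substitute the transformation \eqref{QW-T} into the two equations of \eqref{ud-pToda} and reduce each of them to the bilinear equation \eqref{tau-function}, using the constraints $(a)$ and $(b)$ to justify the manipulations of the $\min$-operations. First I would verify the second equation of \eqref{ud-pToda}, namely $W_n^{t+\frac1M} = Q_{n+1}^t + W_n^t - Q_n^{t+1}$. Writing out $Q_{n+1}^t$, $W_n^t$, $Q_n^{t+1}$ and $W_n^{t+\frac1M}$ via \eqref{QW-T}, most of the $T$-terms and all of the $\delta^{[t]},\theta^{[t]}$ constants telescope; one is left with an identity of the form $T_n^{t}+T_n^{t+1+\frac1M} = T_n^{t+1}+T_n^{t+\frac1M}+(\text{the other branch not appearing})$, which after collecting becomes exactly the statement that one distinguished argument of the $\min$ in \eqref{tau-function} is attained, i.e.\ it follows directly from \eqref{tau-function} with no inequality needed. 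This step is essentially bookkeeping.

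The substantive step is the first equation $Q_n^{t+1} = \min[W_n^t,\, Q_n^t - X_n^t]$, where $X_n^t = \min_{k=0,\ldots,N-1}\sum_{j=1}^k (W_{n-j}^t - Q_{n-j}^t)$. Using \eqref{QW-T} I would first simplify $W_{n-j}^t - Q_{n-j}^t$: the combination is again a difference of $T$'s plus the constant $\theta^{[t]}$, and when summed over $j=1,\ldots,k$ it telescopes, so that $\sum_{j=1}^k(W_{n-j}^t-Q_{n-j}^t)$ becomes $(T_{n-k}^{t+1}-T_n^{t+1}) + (T_{n}^{t+\frac1M}-T_{n-k}^{t+\frac1M}) + k\,\theta^{[t]}$ up to the precise shifts; here the quasi-periodicity $T_{n+N}^t = T_n^t + (an+bt+c)$ is needed to make sense of the index $n-k$ running through a full period, and the genuine content of hypothesis $(b)$, $2b-a < N\theta^{[t]}$, is precisely that wrapping once around the cycle strictly increases this sum, so the minimum over $k\in\{0,\ldots,N-1\}$ is the honest minimum over all $k\ge 0$. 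Then $Q_n^t - X_n^t$ becomes $-\min_k[\ldots] + (\text{stuff}) = \max_k[-(\ldots)] + (\text{stuff})$; rewriting $\min[W_n^t, Q_n^t - X_n^t]$ and clearing the common additive constants, the claimed identity $Q_n^{t+1} = \min[W_n^t, Q_n^t - X_n^t]$ should collapse to \eqref{tau-function} after recognizing the $k=0$ term of $X_n^t$ (which is the empty sum $=0$) as producing the first branch $T_n^{t+1}+T_n^{t+\frac1M}$ and the $k=1$ term as producing the second branch $T_{n-1}^{t+1+\frac1M}+T_{n+1}^t+\theta^{[t]}$, with the higher $k$ terms being subsumed by iterating \eqref{tau-function}.

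The main obstacle I expect is exactly this last reduction: showing that the full $N$-fold minimum in $X_n^t$, after the telescoping, is controlled by the two-term $\min$ in \eqref{tau-function}. Concretely, one wants that for all $k$, the $k$-th telescoped term is $\ge$ the value predicted by applying \eqref{tau-function} repeatedly, which is an induction on $k$ using \eqref{tau-function} as the inductive step and condition $(b)$ to close the loop after $N$ steps; condition $(a)$, that $\delta^{[t]}+\theta^{[t]}$ is independent of $t$, is what guarantees the additive constants in $Q_n^t$ and $W_n^t$ match up across the two sides so that no spurious $t$-dependence survives. I would carry this out by first proving the ``$\le$'' direction of the first equation of \eqref{ud-pToda} (easy: the $k=0,1$ branches of $X_n^t$ give upper bounds that combine, via \eqref{tau-function}, to $Q_n^{t+1}$) and then the ``$\ge$'' direction by the induction just described, using the strict inequality in $(b)$ to rule out the wrap-around term ever being the minimizer in a way that would violate the bound.
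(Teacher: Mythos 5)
Your route is essentially the paper's own: the proof is omitted there precisely because it is the $M=1$ argument of \cite[\S 3, Propositions 3.3 and 3.4]{IT09}, which proceeds exactly as you describe --- substitute \eqref{QW-T} into \eqref{ud-pToda}, telescope $\sum_{j=1}^{k}(W_{n-j}^t-Q_{n-j}^t)$ into a $T$-expression (this is the nonlocal bilinear form displayed in the Remark following the proposition), and prove the resulting $N$-term-min equation equivalent to \eqref{tau-function} by iterating it under the quasi-periodicity, with condition (a) cancelling the constants (note $[t+1]=[t]$, so $\delta^{[t+1]}=\delta^{[t]}$) and condition (b) making the telescoped sum over one full period positive, so that the minimum over $0\le k\le N-1$ is the honest minimum over all $k\ge 0$. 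Two points to tighten when executing it: the second equation of \eqref{ud-pToda} is a pure identity under \eqref{QW-T} given (a) alone (no appeal to \eqref{tau-function} is needed), and because $X_n^t$ enters negated, as $Q_n^t-X_n^t$, the desired equality splits into an ``all $k$'' estimate and an ``exists $k$'' estimate, so your ``easy $\le$ direction from the $k=0,1$ branches'' is not automatic --- producing a minimizing $k$ by tracking which branch of \eqref{tau-function} is attained at the sites $n-1,n-2,\dots$ is exactly the case analysis that occupies \cite[Proposition 3.4]{IT09} and where the strictness in (b) is used.
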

We omit the proof since it is essentially same as that of $M=1$ case
in \cite[\S 3]{IT09}.

\begin{remark}
Via \eqref{QW-T}, \eqref{ud-pToda} is directly related to 
  \begin{align*}
    &T_n^t + T_n^{t+1+\frac{1}{M}} =
    T_n^{t+1} + T_n^{t+\frac{1}{M}} + X_{n+1}^t,
    \\ \nonumber
    &X_n^t = \min_{j=0,\ldots,M-1} 
      \bigl[
      j \theta^{[t]} 
      + T_{n-j-1}^{t+\frac{1}{M}}+T_{n-j-1}^{t+1}+T_n^t+T_{n-1}^t
      \\ \nonumber  & \hspace{3cm}  
      -(T_{n-1}^{t+1}+T_{n-1}^{t+\frac{1}{M}}+T_{n-j}^t+T_{n-j-1}^t) \bigr].
  \end{align*}
This is shown to be equivalent to \eqref{tau-function}
under the quasi-periodicity of $T_n^t$.
See \cite[Proposition 3.3 and 3.4]{IT09} for the proof. 
\end{remark}

\subsection{Example: $\mT(3,2)$}\label{sec:32}

We demonstrate a general solution to $\mT(3,2)$.
Take a generic point $\tau \in T_{\ve}$, 
and the spectral curve $C(f_{\ve})$ for $T(3,2)$ on $K_{\ve}$ 
is given by the zero of $f_{\ve} = \phi \circ \psi (\tau) \in F_{\ve}$:
$$
  f_{\ve} = y^4 + y^3 f_{30} + y^2(x f_{21} + f_{20}) + y(-x^2+x f_{11}+f_{10})
  + f_0.
$$
Due to Proposition \ref{prop:MNToda-curve}, 
$C(f_{\ve})$ has a good tropicalization.
The tropical curve $\Gamma :=TV(f_{\ve})$ in $\R^2$ 
is the indifferentiable points of $\xi := \Val(X,Y;f_{\ve})$: 
$$
  \min \big[ 4 Y, 3Y + F_{30}, 2 Y + \min[X+F_{21},F_{20}],
             Y + \min[2X,X+F_{11},F_{10}], F_0 \bigr].
$$

We assume that $\Gamma$ is smooth, then its genus is $g=2$.
See Figure \ref{GammaC} for $\Gamma$,
where we set the basis $\gamma_1, \gamma_2$ of $\pi_1(\Gamma)$.
\begin{figure}
\begin{center}
\unitlength=1.2mm
\begin{picture}(80,80)(0,0)

\put(0,5){\line(1,0){80}}

\thicklines

\put(0,0){\line(1,1){5}}
\put(5,5){\line(1,1){15}}
\put(5,5){\line(4,1){20}}
\put(25,10){\line(2,1){10}}
\put(20,20){\line(1,0){15}}
\put(35,20){\line(2,1){15}}
\put(20,60){\line(-1,1){20}}
\put(20,60){\line(2,-1){30}}
\put(50,45){\line(1,0){30}}
\put(50,27.5){\line(1,0){30}}
\put(35,15){\line(1,0){30}}
\put(25,10){\line(1,0){30}}

\put(35,15){\line(0,1){5}}
\put(20,20){\line(0,1){40}}
\put(50,45){\line(0,-1){17.5}}

\put(25,10){\circle*{1.5}} \put(23,6){$A_1$}
\put(35,15){\circle*{1.5}} \put(33,11){$A_2$}
\put(50,27.5){\circle*{1.5}} \put(48,23.5){$A_3$}
\put(50,45){\circle*{1.5}} \put(49,47){$R$}
\put(20,60){\circle*{1.5}} \put(19,62){$P$}
\put(5,5){\circle*{1.5}} \put(6,2){$Q$}

\thinlines

\put(35,36){\oval(20,20)}
\put(33,25.2){$>$}
\put(33,28){$\gamma_1$}
\put(23,15){\oval(10,6)}
\put(21,11.2){$>$}
\put(21,14){$\gamma_2$}

\end{picture}
\caption{Tropical spectral curve $\Gamma$ for $\mT(3,2)$}\label{GammaC}
\end{center}
\end{figure}
The period matrix $B$ for $\Gamma$ becomes
$$
  B = \begin{pmatrix}
        2 F_0 - 7 F_{11} + F_{20} & F_{11} - F_{20} \\
        F_{11} - F_{20} & F_{11} + F_{20}
      \end{pmatrix},
$$
and the tropical Jacobi variety $J(\Gamma)$ of $\Gamma$ is 
$$
  J(\Gamma) = \R^2 / \Z^2 B.
$$

We fix $6$ points on the universal covering space of $\Gamma$
as follows:
\begin{align*}
  &\vec{L} = \int_P^Q = (F_0-3F_{11},F_{11}),
  \\
  &\vec{\lambda}_1 = \int_Q^{A_3} = (F_{10}-2F_{11},-F_{20}),
  \quad 
  \vec{\lambda}_2 = \int_Q^{A_2} = (0, F_{20}-F_{30}),
  \\
  &\vec{\lambda}_3 = \int_Q^{A_1} = (0, F_{30}),
  \quad
  \vec{\lambda} = \int_R^P = (F_{10}-2F_{11},0).
\end{align*}
Here the path $\gamma_{Q \to A_3}$ from $Q$ to $A_3$ is chosen 
as $\gamma_{Q \to A_3} \cap \gamma_1 \cap \gamma_2 \neq \emptyset$.
Remark that 
$\vec{\lambda} = \vec{\lambda}_1 + \vec{\lambda}_2 + \vec{\lambda}_3$
holds.

\begin{proposition}\label{prop:T32}
  Fix $\bZ_0 \in \R^2$.
  The tropical theta function $\Theta(\bZ;B)$ satisfies
  the following identities:
  \begin{align}\label{T32Fay}
    \begin{split}
    &\Theta(\bZ_0) + \Theta(\bZ_0+\vec{\lambda}+\vec{\lambda}_i)
    \\ &\quad 
     = \min [ \Theta(\bZ_0+\vec{\lambda}) + \Theta(\bZ_0+\vec{\lambda}_i),
              \Theta(\bZ_0-\vec{L}) 
              + \Theta(\bZ_0+\vec{L}+\vec{\lambda}+\vec{\lambda}_i)
              + \theta_i],
    \end{split}
  \end{align}
  for $i=1,2,3$, where $\theta_1 = F_0-3 F_{11}$ and 
  $\theta_2 = \theta_3 = F_0-2 F_{11}$. 
\end{proposition}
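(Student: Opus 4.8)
The plan is to deduce all three identities \eqref{T32Fay} from the tropical trisecant identity (Theorem~\ref{tropicalFay}), applied three times to carefully chosen quadruples of points on the universal covering of $\Gamma$. First I would record that the hypotheses of Theorem~\ref{tropicalFay} are met: $C(f_\ve)$ has a good tropicalization by Proposition~\ref{prop:MNToda-curve} applied to the generic $\tau\in T_\ve$, $C(f_{\bar\ve})$ is non-singular for small $\bar\ve$, and $\Gamma=TV(f_\ve)$ is smooth by assumption, with genus $g=2$. Since a genus-$2$ curve is hyperelliptic, $C(f_\ve)$ is hyperelliptic, so Lemma~\ref{lemma:theta-ch} produces a non-singular odd theta characteristic $(\alpha,\beta)$ for $\Jac(C(f_\ve))$: fix a suitable $\beta\in\tfrac12\Z^2$ and let $\alpha$ be as in that lemma.

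Next, for each $i\in\{1,2,3\}$ I would apply Theorem~\ref{tropicalFay} with $(P_1,P_2,P_3,P_4)=(Q,R,P,A^{(i)})$, where $A^{(1)}=A_3$, $A^{(2)}=A_2$, $A^{(3)}=A_1$, so that $\int_Q^{A^{(i)}}=\vec\lambda_i$. Using $\vec L=\int_P^Q$, $\vec\lambda=\int_R^P=\vec\lambda_1+\vec\lambda_2+\vec\lambda_3$, the compatible path choice specified just before the proposition, and additivity of the period integrals on the universal cover, one computes $\int_{P_1}^{P_3}=-\vec L$, $\int_{P_2}^{P_4}=\vec L+\vec\lambda+\vec\lambda_i$, $\int_{P_2}^{P_3}=\vec\lambda$, $\int_{P_1}^{P_4}=\vec\lambda_i$, and $\int_{P_1+P_2}^{P_3+P_4}=\vec\lambda+\vec\lambda_i$. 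Substituting these into the definitions of $F_1,F_2,F_3$ gives $F_3(\bZ)=\Theta(\bZ)+\Theta(\bZ+\vec\lambda+\vec\lambda_i)+c_3$, $F_2(\bZ)=\Theta(\bZ+\vec\lambda)+\Theta(\bZ+\vec\lambda_i)+c_2$, and $F_1(\bZ)=\Theta(\bZ-\vec L)+\Theta(\bZ+\vec L+\vec\lambda+\vec\lambda_i)+c_1$, where $c_1,c_2,c_3$ are $\bZ$-independent sums of values of $\Theta[\beta]$ at period integrals. Thus the three terms of \eqref{T32Fay} match $F_3,F_2,F_1$ respectively, up to the additive constants $c_j$, and since $\Theta(\bZ)$ occurs unshifted only in $F_3$, the left-hand side of \eqref{T32Fay} must correspond to $F_3$.

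It then remains to do two things. First, verify the sign hypothesis of Theorem~\ref{tropicalFay}, namely that $s_1=s_2=-s_3$, so that \eqref{trop-fay} holds in the form $F_3=\min[F_1,F_2]$ rather than a permuted form; this requires evaluating $k_1,k_2,k_3$ with the chosen $(\alpha,\beta)$, i.e.\ computing the relevant $\arg_{{\bf m}\in\Z^2}q_\beta$ at the period integrals above. Second, with the shape $F_3=\min[F_1,F_2]$ in hand, \eqref{T32Fay} reduces to the constant identities $c_2=c_3$ and $c_1-c_3=\theta_i$, which I would verify by an explicit computation using the coordinates of $\vec L,\vec\lambda,\vec\lambda_i$ listed before the proposition, the explicit period matrix $B$, and the quasi-periodicity of $\Theta[\beta]$; the discrepancy between $\theta_1=F_0-3F_{11}$ and $\theta_2=\theta_3=F_0-2F_{11}$ should emerge from the fact that $\vec\lambda_1$ has a nonzero first component while $\vec\lambda_2$ and $\vec\lambda_3$ do not.

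The main obstacle I expect is the sign bookkeeping: pinning down $(\alpha,\beta)$ and the lifts of the period integrals on the universal cover precisely enough to force $s_1=s_2\neq s_3$, since an error here permutes the roles of $F_1,F_2,F_3$ and breaks the match with \eqref{T32Fay}. Of comparable difficulty is the explicit evaluation of the $\Theta[\beta]$-constants $c_1-c_3$ and $c_2-c_3$; both are in principle routine given the data in \S\ref{sec:32}, but demand care.
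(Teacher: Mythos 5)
Your proposal is correct and takes essentially the same route as the paper: the paper's proof likewise fixes the non-singular odd theta characteristic $(\alpha,\beta)=((\tfrac12,0),(\tfrac12,\tfrac12))$ via Lemma \ref{lemma:theta-ch} and applies Theorem \ref{tropicalFay} with $(P_1,P_2,P_3,P_4)=(R,Q,P,A_{4-i})$, your labeling merely swapping $P_1$ and $P_2$, which only exchanges the roles of $F_1$ and $F_2$. The sign verification and the $\Theta[\beta]$-constant bookkeeping that you flag as the remaining work are exactly what the paper's two-line proof leaves implicit.
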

\begin{proof}
  Since the curve $C(f_{\ve})$ is hyperelliptic, 
  we fix a non-singular odd theta characteristic as
  $(\alpha,\beta)=((\frac{1}{2},0),(\frac{1}{2},\frac{1}{2}))$
  following Lemma \ref{lemma:theta-ch}.
  By setting $(P_1,P_2,P_3,P_4) = (R,Q,P,A_{4-i})$ in Theorem \ref{tropicalFay}
  for $i=1,2,3$, we obtain \eqref{T32Fay}. 
\end{proof}

Now it is easy to show the following:
\begin{proposition}
  (i) Fix $\bZ_0 \in \R^2$ and $\{i,j\} \subset \{1,2,3\}$, 
  and define $T_n^t$ by
  \begin{align*}
    \begin{split}
    &T_n^t = \Theta(\bZ_0-\vec{L}n+\vec{\lambda}t),
    \\
    &T_n^{t+\frac{1}{3}} 
     = \Theta(\bZ_0-\vec{L}n+\vec{\lambda}t+\vec{\lambda}_i),
    \\
    &T_n^{t+\frac{2}{3}} 
     = \Theta(\bZ_0-\vec{L}n+\vec{\lambda}t+\vec{\lambda}_i+\vec{\lambda}_j),
    \end{split}
    \quad (t \in \Z).
  \end{align*}
  Then they satisfy the bilinear equation \eqref{tau-function}
  with 
  $\theta^{[0]} = \theta_i$, $\theta^{[\frac{1}{3}]} = \theta_j$ and 
  $\theta^{[\frac{2}{3}]} = \theta_k$,
  where $\{k\} = \{1,2,3\} \setminus \{i,j\}$. 
  \\
  (ii) With (i) and 
  $\delta^{[\frac{k}{3}]}=F_0-2F_{11}-\theta^{[\frac{k}{3}]} ~(k=0,1,2)$, 
  we obtain a general solution to $\mT(3,2)$. 
\end{proposition}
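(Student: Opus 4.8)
The plan is to deduce both parts directly from Proposition~\ref{prop:T32} together with the bilinear-form Proposition in \S 4.1, so that essentially no new geometry is needed. For part (i), I would fix $\{i,j\}\subset\{1,2,3\}$ and $\{k\}=\{1,2,3\}\setminus\{i,j\}$, and verify that the three functions $T_n^t,T_n^{t+\frac13},T_n^{t+\frac23}$ as defined (translates of $\Theta$ by the lattice-like vectors $-\vec L n+\vec\lambda t$ plus one of $0,\vec\lambda_i,\vec\lambda_i+\vec\lambda_j$) satisfy \eqref{tau-function} with the prescribed $\theta^{[t]}$. The key observation is that, since $\vec\lambda=\vec\lambda_1+\vec\lambda_2+\vec\lambda_3$, advancing $t\mapsto t+1$ in the fractional-index definitions reshuffles the three summand vectors $\{0,\vec\lambda_i,\vec\lambda_i+\vec\lambda_j\}$ into $\{\vec\lambda_i+\vec\lambda_j+\vec\lambda_k,\ldots\}$, which up to the overall shift by $\vec\lambda$ is again of the same form with $i,j,k$ cyclically permuted. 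Thus the bilinear equation at the three half-integer times $t$, $t+\frac13$, $t+\frac23$ reduces, after collecting the $\vec L n$ and $\vec\lambda t$ shifts into a single parameter $\bZ_0$, to exactly the three instances $i$, $j$, $k$ of \eqref{T32Fay}; one then reads off that $\theta^{[0]}=\theta_i$, $\theta^{[\frac13]}=\theta_j$, $\theta^{[\frac23]}=\theta_k$.

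The main bookkeeping step is matching the left- and right-hand sides of \eqref{tau-function} term by term against \eqref{T32Fay}. Concretely, I would check that $T_n^t+T_n^{t+1+\frac1M}$ corresponds to $\Theta(\bZ_0)+\Theta(\bZ_0+\vec\lambda+\vec\lambda_i)$ (the ``$+1$'' in the time index producing the $\vec\lambda$), that $T_n^{t+1}+T_n^{t+\frac1M}$ corresponds to $\Theta(\bZ_0+\vec\lambda)+\Theta(\bZ_0+\vec\lambda_i)$, and that the mixed term $T_{n-1}^{t+1+\frac1M}+T_{n+1}^t$ corresponds to $\Theta(\bZ_0-\vec L)+\Theta(\bZ_0+\vec L+\vec\lambda+\vec\lambda_i)$ — here the $n\pm1$ shifts generate the $\mp\vec L$, consistent with the choice $(P_1,P_2,P_3,P_4)=(R,Q,P,A_{4-i})$ in Theorem~\ref{tropicalFay} used to prove Proposition~\ref{prop:T32}. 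One also checks the sign/argmin conventions implicit in $\Theta[\beta]$ contribute only the constants absorbed into $\theta_i$, and that $\theta_1=F_0-3F_{11}$, $\theta_2=\theta_3=F_0-2F_{11}$ are precisely the values appearing in Proposition~\ref{prop:T32}.

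For part (ii), I would first record the quasi-periodicity of the constructed $T_n^t$: the quasi-periodicity of $\Theta[\beta]$ (hence of $\Theta$) under $\bZ\mapsto\bZ+K\bf l$ stated in \S 2.3, applied with the period matrix $B$ and the fact that $N\vec L$ lies in the period lattice (this is the hyperelliptic $N=2$ statement, visible from the explicit $\vec L$, $B$), gives $T_{n+N}^t=T_n^t+(an+bt+c)$ for explicit $a,b,c$. Then I would verify the two admissibility conditions on $(\delta^{[t]},\theta^{[t]})$: condition (a), that $\delta^{[t]}+\theta^{[t]}$ is independent of $t$, holds by the very choice $\delta^{[\frac k3]}=F_0-2F_{11}-\theta^{[\frac k3]}$, so the sum is the constant $F_0-2F_{11}$; and condition (b), $2b-a<N\theta^{[t]}$, becomes an explicit inequality among the $F$'s, valid on the Zariski-open locus where $\Gamma$ is smooth and the $\theta_i$ are the above positive combinations. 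Granting these, the bilinear-form Proposition of \S 4.1 immediately converts the solution of \eqref{tau-function} from part (i) into a solution of \eqref{ud-pToda} via \eqref{QW-T}, and a dimension/genericity count (the construction has the expected number $g=2$ of free theta-arguments plus the curve moduli, matching $\dim\mT$) shows it is the general solution.

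I expect the main obstacle to be purely organizational rather than conceptual: correctly tracking the interplay between the time-shift $t\mapsto t+1$ (which acts by $+\vec\lambda$ and cyclically permutes $i,j,k$) and the space-shift $n\mapsto n\pm1$ (which acts by $\mp\vec L$), and confirming that these two commuting shifts are compatible with the single three-term identity \eqref{T32Fay} for all three choices of $i$ simultaneously — i.e., that the ``one identity per fractional time'' pattern closes up consistently around $\Z/3$. A secondary subtlety is verifying the quasi-periodicity constant $a$ (equivalently, that $N\vec L=2\vec L$ is a genuine period of $B$) and checking inequality (b) on the smoothness locus; but both reduce to the explicit formulas already displayed for $B$, $\vec L$, and the $\theta_i$, so no essentially new input is required beyond Proposition~\ref{prop:T32} and the \S 4.1 bilinearization.
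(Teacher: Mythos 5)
Your proposal is correct and takes essentially the approach the paper intends: the paper gives no written proof (it only says the result is ``easy to show''), the implicit argument being exactly your reduction of \eqref{tau-function} at the times $t,\,t+\frac13,\,t+\frac23$ to the three instances $i,j,k$ of \eqref{T32Fay} via $\vec{\lambda}=\vec{\lambda}_1+\vec{\lambda}_2+\vec{\lambda}_3$, followed by invoking the bilinearization proposition of \S 4.1. Your supporting observations check out as well --- in particular $2\vec{L}=(1,1)B$, so $N\vec{L}$ is a genuine period and the quasi-periodicity and conditions (a), (b) are verified exactly as you outline.
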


\begin{remark}
  Depending on a choice of $\{i,j\} \subset \{1,2,3\}$,
  we have $3!=6$ types of solutions. 
  This suggests a claim for the isolevel set $\Phi^{-1}(\xi)$:
  $$
    \Phi^{-1}(\xi) \simeq J(\Gamma)^{\oplus 6}.
  $$
\end{remark}

\subsection{Conjectures on $\mT(M,N)$}

We assume $\gcd(M,N)=1$ again. 
Let $\Gamma$ be the smooth tropical curve given by 
the indifferentiable points of a tropical polynomial $\xi \in \mF$ 
\eqref{slM-udF}.
We fix the basis of $\pi_1(\Gamma)$ by using 
$\gamma_{i,j} ~(i=1,\ldots,M, ~j=1,\ldots,d_i)$ as 
Figure \ref{fig:GammaMN}.
The genus $g = \frac{1}{2}(N-1)(M+1)$ of $\Gamma$ can be obtained 
by summing up $d_j$ from $j=1$ to $\max_{j=1.\ldots,M} \{j ~|~ d_j\geq 1\}$.

\begin{figure}
\begin{center}
\unitlength=1.2mm
\begin{picture}(80,80)(0,0)

\put(0,5){\line(1,0){80}}

\thicklines

\put(0,0){\line(1,1){5}}
\put(5,5){\line(1,1){10}}
\put(15,15){\line(5,1){5}}
\put(15,15){\line(1,2){5}}
\put(20,25){\line(0,1){4}}
\put(20,25){\line(4,1){4}}

\put(5,5){\line(5,1){10}}
\put(25,10){\line(-4,-1){5}}
\put(25,10){\line(2,1){10}}
\put(35,20){\line(-1,0){5}}
\put(35,20){\line(2,1){5}}
\put(20,60){\line(-1,1){20}}
\put(20,60){\line(2,-1){5}}
\put(60,45){\line(1,0){20}}
\put(60,45){\line(-4,1){16}}
\put(44,49){\line(-3,1){9}}
\put(50,27.5){\line(1,0){30}}
\put(50,27.5){\line(-2,-1){3}}
\put(35,15){\line(1,0){30}}
\put(25,10){\line(1,0){30}}
\put(44,49){\line(0,-1){5}}

\put(35,15){\line(0,1){5}}
\put(20,60){\line(0,-1){20}}
\put(20,43){\line(1,0){4}}
\put(50,27.5){\line(0,1){5}}
\put(60,38){\line(0,1){7}}
\put(60,38){\line(-2,-1){4}}
\put(60,38){\line(1,0){20}}

\put(25,10){\circle*{1.5}} \put(23,6){$A_1$}
\put(35,15){\circle*{1.5}} \put(33,11){$A_2$}
\put(50,27.5){\circle*{1.5}} \put(48,23.5){$A_i$}
\put(20,60){\circle*{1.5}} \put(19,62){$P$}
\put(5,5){\circle*{1.5}} \put(6,2){$Q$}
\put(60,38){\circle*{1.5}} \put(58,34){$A_M$}
\put(60,45){\circle*{1.5}} \put(59,47){$R$}

\put(44,35){$\vdots$}
\put(23,33){$\vdots$}
\put(38,25){$\ldots$}
\put(27,22){$\ldots$}
\put(20,12){$\ldots$}

\thinlines

\put(54,41){\circle{7}}
\put(52.5,36.8){$>$}  \put(51,40){$\gamma_{1,d_1}$}
\put(25,50){\circle{7}}
\put(23.5,45.8){$>$}  \put(23,49){$\gamma_{1,1}$}

\put(22,20.5){\circle{5}}
\put(20.5,17.4){$>$}  \put(8,20){$\gamma_{M-1,1}$}
\put(31,16.5){\circle{5}}
\put(29.5,13.4){$>$}  \put(36,17){$\gamma_{M,d_M}$}
\put(15,10.5){\circle{5}}
\put(13.5,7.4){$>$}  \put(5,12){$\gamma_{M,1}$}

\end{picture}
\caption{Tropical spectral curve $\Gamma$ for $\mT(M,N)$}
\label{fig:GammaMN}
\end{center}
\end{figure}

Fix three points $P$, $Q$, $R$ on the universal covering space 
$\tilde{\Gamma}$ of $\Gamma$ as Figure \ref{fig:GammaMN}, and define
\begin{align*}
  \vec{L} = \int_P^Q, \qquad 
  \vec{\lambda} = \int_R^P.
\end{align*}
Fix $A_i ~(i=1,\ldots,M)$ on $\tilde{\Gamma}$ as Figure \ref{fig:GammaMN},
such that 
\begin{align*}
  \vec{\lambda}_i = \int_Q^{A_{M+1-i}} ~ (i=1,\ldots,M)
\end{align*}
satisfy $\vec{\lambda} = \sum_{i=1}^M \vec{\lambda}_i$.

We expect that
the bilinear form \eqref{tau-function} is obtained as a consequence of
the tropical Fay's identity \eqref{trop-fay},
by setting $(P_1,P_2,P_3,P_4) = (R,Q,P,A_i)$ in Theorem \ref{tropicalFay}.
The followings are our conjectures:
\begin{conjecture}\label{conj:1}
Let $\mathcal{S}_M$ be the symmetric group of order $M$.
Fix $\bZ_0 \in \R^g$ and $\sigma \in \mathcal{S}_M$, and set
$$
  T_n^{t+\frac{k}{M}} = \Theta(\bZ_0 - \vec{L}n + \vec{\lambda} t + 
                          \sum_{i=1}^{k} \lambda_{\sigma(i)})
$$
for $k=0,\ldots,M-1$.
Then the followings are satisfied:
\\
(i) $T_n^t$ satisfy \eqref{tau-function} with some $\theta^{[t]}$.
\\
(ii) $T_n^t$ gives a general solution to $\mT(M,N)$ via \eqref{QW-T}. 
\end{conjecture}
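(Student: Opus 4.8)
\textbf{Proof proposal for Conjecture \ref{conj:1}.}
The plan is to reduce part (i) to a repeated application of Theorem \ref{tropicalFay} and part (ii) to the bilinear-to-piecewise-linear correspondence already established in the proposition on the bilinear equation. For part (i), I would first observe that, because $C(f_{\ve})$ is the spectral curve of $T(M,N)$ and has a good tropicalization with $TV(f_{\ve})$ smooth (Propositions \ref{prop:integrability} and \ref{prop:MNToda-curve}), Theorem \ref{tropicalFay} applies with the four points specialized to $(P_1,P_2,P_3,P_4)=(R,Q,P,A_i)$. With this choice one has $\int_{P_1}^{P_2}=\vec{L}$, $\int_{P_4}^{P_3}=\vec{\lambda}_i$ up to the relation $\vec{\lambda}=\sum_i\vec{\lambda}_i$, and the three functions $F_1,F_2,F_3$ of \eqref{trop-fay} become, after substituting $\bZ=\bZ_0-\vec{L}n+\vec{\lambda}t+\sum_{l<k}\vec{\lambda}_{\sigma(l)}$ (and using the quasi-periodicity of $\Theta[\beta]$ to absorb the characteristic-dependent terms into constants $\theta^{[t]}$), exactly the three combinations of theta values appearing on the two sides of \eqref{tau-function}. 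The identity \eqref{trop-fay} in the form $F_i=\min[F_{i+1},F_{i+2}]$ then \emph{is} \eqref{tau-function}, with $\theta^{[t]}$ read off from the $\Theta[\beta]$-terms; one must run this for each $k=0,\ldots,M-1$ with the shifted argument, so that the same curve produces the full system indexed by $t\in\Z/M$.

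The subtle point in part (i) is the sign condition in Theorem \ref{tropicalFay}: the identity \eqref{trop-fay} holds only when $s_i=\pm1$, $s_{i+1}=s_{i+2}=\mp1$, and it is $F_i$ (not some other $F$) that equals the $\min$ of the other two. So the real content is a combinatorial lemma: for the specialization $(R,Q,P,A_i)$ and for every $k$, the signs $s_1,s_2,s_3$ defined via $\alpha$ and the $\arg$-data land in exactly the pattern that makes the \emph{left-hand} combination of \eqref{tau-function} the minimum. For $M=1$ this is \cite{IT09}, and for the hyperelliptic case $\mathcal{T}(3,2)$ it is checked in Proposition \ref{prop:T32} using the explicit odd characteristic of Lemma \ref{lemma:theta-ch}; the general case requires knowing the non-singular odd theta characteristic of $\Jac(C(f_{\ve}))$ explicitly enough (or at least knowing how $2\alpha$ pairs with the relevant lattice vectors) to evaluate the parity $k_i$. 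I expect this sign/parity bookkeeping to be the main obstacle, and indeed it is presumably why the statement is left as a conjecture: verifying the required sign pattern uniformly in $M,N$ seems to need a structural description of the characteristic and of the homology classes of the paths $\gamma_{P\to Q}$, $\gamma_{R\to P}$, $\gamma_{Q\to A_i}$ on $\Gamma$ that is not yet available in the generality of Figure \ref{fig:GammaMN}.

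Granting (i), part (ii) should be comparatively routine. One checks that $T_n^{t+k/M}=\Theta(\bZ_0-\vec{L}n+\vec{\lambda}t+\sum_{l\le k}\vec{\lambda}_{\sigma(l)})$ has the quasi-periodicity $T_{n+N}^t=T_n^t+(an+bt+c)$ required by the bilinear proposition: this follows directly from the quasi-periodicity of $\Theta[\beta]$ with $K$ the period matrix $B$, since $\vec{L}$ and $\vec{\lambda}$ are prescribed periods/half-periods read off from Figure \ref{fig:GammaMN}, giving explicit $a,b,c$ (as in the $\mathcal{T}(3,2)$ example, where $a,b,c$ come out of the components of $\vec L$ and $\vec\lambda$ against $B$). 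Then one must verify conditions (a) and (b) on $\delta^{[t]},\theta^{[t]}$: condition (a), that $\delta^{[t]}+\theta^{[t]}$ is $t$-independent, is arranged by \emph{defining} $\delta^{[t]} := (\text{const}) - \theta^{[t]}$ exactly as in the $\mathcal{T}(3,2)$ case ($\delta^{[k/3]}=F_0-2F_{11}-\theta^{[k/3]}$), and condition (b), the inequality $2b-a<N\theta^{[t]}$, reduces to a genericity inequality on the coefficients of $\xi\in\mathcal{F}$ that holds on the relevant Zariski-open locus. Finally, invoking the bilinear proposition, $T_n^t$ yields a solution to \eqref{ud-pToda} via \eqref{QW-T}; that it is a \emph{general} solution would follow by a dimension count — the torus $J(\Gamma)$ of dimension $g=\frac12(N-1)(M+1)$ together with the discrete freedom in $\sigma\in\mathcal{S}_M$ should exhaust the isolevel set $\Phi^{-1}(\xi)$, matching the $\mathcal{T}(3,2)$ remark $\Phi^{-1}(\xi)\simeq J(\Gamma)^{\oplus 6}$ — but this last surjectivity statement is itself the content of Conjecture \ref{conj:2} and would not be fully proved here.
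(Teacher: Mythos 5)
The statement you are addressing is left as a \emph{conjecture} in the paper: the authors themselves only indicate the expected route (``we expect that the bilinear form \eqref{tau-function} is obtained as a consequence of the tropical Fay's identity \eqref{trop-fay}, by setting $(P_1,P_2,P_3,P_4)=(R,Q,P,A_i)$''), prove it for $\mT(3,2)$ in \S\ref{sec:32}, and cite \cite{IT09,IT09b} for $\mT(1,N)$. Your proposal follows exactly this anticipated route, and you correctly locate the place where it is not a proof: the sign condition of Theorem \ref{tropicalFay}. For \eqref{trop-fay} to reproduce \eqref{tau-function} one must know, for every $k$ and for the quadruple $(R,Q,P,A_{\sigma(k+1)})$, that the parities $k_1,k_2,k_3$ produce the pattern in which the \emph{correct} $F_i$ is the minimum of the other two; this requires an explicit non-singular odd theta characteristic for the (generally non-hyperelliptic) spectral curve of $T(M,N)$ and control of how $2\alpha$ pairs with the $\arg\,q_\beta$ lattice vectors attached to the paths $P\to Q$, $R\to P$, $Q\to A_i$ in Figure \ref{fig:GammaMN}. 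Lemma \ref{lemma:theta-ch} only covers the hyperelliptic case ($M=1$ or $N=2$), so no substitute is available in general; since you do not supply one, part (i) remains open in your write-up, exactly as in the paper. In addition, your claim that the matching of $\theta^{[t]}$ and the verification of conditions (a) and (b) of the bilinear proposition ``reduces to a genericity inequality'' is asserted rather than argued: in the $\mT(3,2)$ case the constants $\theta_i=F_0-3F_{11}$ or $F_0-2F_{11}$ and $\delta^{[k/3]}$ are computed explicitly from the edge data of $\Gamma$, and an analogous explicit computation (in particular that $\delta^{[t]}+\theta^{[t]}$ can be made $t$-independent and that $2b-a<N\theta^{[t]}$ holds on $\mF$) is part of what the conjecture asks for.

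For part (ii) you are right that, granting (i), the bilinear proposition yields \emph{a} solution via \eqref{QW-T} once the quasi-periodicity $T_{n+N}^t=T_n^t+(an+bt+c)$ is checked from the quasi-periodicity of $\Theta$ (this does require verifying that $N\vec{L}$, and the relevant combination involving $\vec\lambda$, lie in the period lattice $\Z^g B$ — a statement about Figure \ref{fig:GammaMN} that you assume rather than prove). But the word ``general'' in (ii) is not merely Conjecture \ref{conj:2}: even before asking for an isomorphism $J(\Gamma)^{\oplus M!}\simeq\Phi^{-1}(\xi)$, one needs surjectivity of the theta-function parametrization onto the isolevel set, and your dimension count ($\dim J(\Gamma)=g$ plus the finite choice of $\sigma$) does not by itself give this. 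So the proposal is an accurate roadmap — essentially the one the authors themselves sketch — but it does not close either of the two genuine gaps (the sign/characteristic bookkeeping for general $(M,N)$, and the surjectivity behind ``general solution''), which is precisely why the statement is a conjecture rather than a theorem in the paper.
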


\begin{conjecture}\label{conj:2}
The above solution induces the isomorphism map 
from $J(\Gamma)^{\oplus M!}$  to the isolevel set $\Phi^{-1}(\xi)$.  
\end{conjecture}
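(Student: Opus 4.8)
The plan is to build the map $\Psi\colon J(\Gamma)\times\mathcal{S}_M\to\Phi^{-1}(\xi)$ explicitly out of Conjecture \ref{conj:1} and then to prove it is a bijection by a dimension count together with a degeneration argument from the algebraic isolevel set. First I would fix $\sigma\in\mathcal{S}_M$ and, for $\bZ_0\in\R^g$, set $T_n^{t+\frac{k}{M}}=\Theta(\bZ_0-\vec{L}n+\vec{\lambda}t+\sum_{i=1}^{k}\lambda_{\sigma(i)})$ and define $(Q_n^{t+\frac{k}{M}},W_n^t)$ by \eqref{QW-T}. Granting part (i) of Conjecture \ref{conj:1}, together with the verification that the induced $\theta^{[t]}$ obey conditions (a)--(b) of the bilinear proposition and that $\delta^{[t]}$ can be chosen as in \S\ref{sec:32}, this produces a point of $\mT$ solving \eqref{ud-pToda}; granting part (ii) it lies in a single isolevel set, which one checks equals $\Phi^{-1}(\xi)$ by computing the tropicalized characteristic polynomial directly in terms of $B$ and the $\vec{\lambda}_i$, exactly as was done for $\mT(3,2)$. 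Well-definedness on $J(\Gamma)=\R^g/\Z^g B$ follows from the quasi-periodicity of $\Theta[\beta]$: replacing $\bZ_0$ by $\bZ_0+{\bf l}B$ changes each $T_n^{t+\frac{k}{M}}$ by a function affine in $(n,t)$, and such a change is annihilated by the second-difference operators in \eqref{QW-T}; the required quasi-periodicity $T_{n+N}^t=T_n^t+(an+bt+c)$ is forced in the same way using $\vec{\lambda}=\sum_i\vec{\lambda}_i$.

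Next, injectivity. Here I would argue as in the $M=1$ case \cite{IT09}: from the family $\{Q_n^{t+\frac{k}{M}},W_n^t\}$ one recovers the second differences of $\Theta$ along the directions $\vec{L},\vec{\lambda}_1,\dots,\vec{\lambda}_M$, and the piecewise-linear structure of $\Theta$ (its $\arg$-min decomposition of $\R^g$) determines $\bZ_0$ modulo $\Z^g B$ uniquely, leaving only the finite ambiguity of the ordering of the $\vec{\lambda}_i$, which is precisely the datum $\sigma$. Since each slice $\Psi(J(\Gamma)\times\{\sigma\})$ is connected, being a continuous image of the torus $J(\Gamma)$, the $M!$ slices are then pairwise disjoint, so $\Psi$ is an injection onto a union of $M!$ disjoint copies of $J(\Gamma)$; a dimension count ($\dim\mT-\dim\mF=\frac12(M+1)(N-1)=g=\dim J(\Gamma)$) confirms the image has the expected dimension.

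The main point — and the main obstacle — is surjectivity, that every $\tau\in\Phi^{-1}(\xi)$ lies in the image of $\Psi$. My plan is to lift to the Puiseux level: by Proposition \ref{prop:MNToda-curve} a generic such $\tau$ is the valuation of some $\tau_{\ve}\in T_{\ve}$ whose spectral curve $C(f_{\ve})$ has good tropicalization with $TV(f_{\ve})=\xi$. By the theorem of Mumford and van Moerbeke \cite{vanMoerMum79} and the construction of \cite{Iwao09}, the algebraic isolevel set $(\phi\circ\psi)^{-1}(f_{\ve})$ is an open subset of $\Jac(C(f_{\ve}))$ on which the $T(M,N)$ flow is linear, with solution expressed through ratios of Riemann theta functions whose arguments involve the Abel--Jacobi images of the points of $C(f_{\ve})$ over $x=\infty$. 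Applying $\val$ and invoking Theorem \ref{Trop-period} (so that $\frac{2\pi\bar{\ve}}{\sqrt{-1}}B_{\bar{\ve}}\to B_T=B$) together with the ultradiscrete limit of the Riemann theta function (whose $\val$ converges to $\Theta$, cf. \cite{Iwao08}) should identify $\tau$ with a point produced by $\Psi$; the factor $M!$ arises because, as $\bar{\ve}\to 0$, the $M!$ orderings of the $M$ marked ends $A_1,\dots,A_M$ of $\Gamma$ — branches over $x=\infty$ interchanged by monodromy on $C(f_{\ve})$ but separated in the tropical limit — are no longer identified. The hard part is to make this degeneration uniform: one must control $\val$ of the theta quotients along the whole isolevel set rather than at a single base point, show the limit is single-valued on each of the $M!$ combinatorial sectors of $\Jac(C(f_{\ve}))$ cut out by the $\arg$-min chambers of $\Theta$, and verify that these sectors exhaust $J(\Gamma)^{\oplus M!}$. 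A purely tropical alternative would require first establishing a tropical Mumford--van Moerbeke theorem for $\mT(M,N)$ — that $\Phi^{-1}(\xi)$ is an $M!$-fold cover of the tropical Jacobian of the tropical spectral curve — which is not available for $M>1$ and would itself be the crux.
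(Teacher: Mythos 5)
The statement you are trying to prove is stated in the paper as Conjecture \ref{conj:2}, and the paper itself offers no proof: the authors only prove it in the previously known cases ($\mT(1,g+1)$, via \cite{IT09,IT09b}) and, for $\mT(3,2)$, establish only Conjecture \ref{conj:1}, with the $J(\Gamma)^{\oplus 6}$ statement appearing merely as a ``suggested claim'' in a remark. So there is no paper proof to compare against; the question is whether your proposal actually closes the conjecture, and it does not. First, the whole construction of the map $\Psi$ is conditional on Conjecture \ref{conj:1}, which is itself open for general $(M,N)$ (the paper proves it only for $M=1$ and for $\mT(3,2)$); ``granting part (i) and part (ii)'' is therefore not an admissible starting point for a proof of Conjecture \ref{conj:2}, and even granting them you still must verify that the common isolevel set of the theta solution is exactly $\Phi^{-1}(\xi)$ for the given $\xi$, which for $M=1$ required a separate nontrivial computation (this is essentially the content of \cite{IT09b}) and is not ``checked directly'' in general.

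Second, the two substantive steps are asserted rather than proved. For injectivity you claim that the second differences of $\Theta$ along $\vec{L},\vec{\lambda}_1,\dots,\vec{\lambda}_M$ determine $\bZ_0$ modulo $\Z^g B$ and that the $M!$ slices are pairwise disjoint; neither is automatic. Recovering a point of the tropical Jacobian from piecewise-linear data of this kind is exactly the hard part of the $M=1$ isomorphism theorem, and disjointness of the $\sigma$-sectors (no coincidences $\Psi(\bZ_0,\sigma)=\Psi(\bZ_0',\sigma')$ for $\sigma\neq\sigma'$) is part of what the conjecture asserts, not something that follows from connectedness of each slice. For surjectivity you yourself identify the crux: one would need either a uniform ultradiscrete limit of the theta-function solution of \cite{Iwao09} over the whole algebraic isolevel set (controlling $\val$ of theta quotients beyond the single-fiber period statement of Theorem \ref{Trop-period}, which concerns only the period matrix), or a tropical Mumford--van Moerbeke theorem for $\mT(M,N)$, neither of which exists. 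Your outline is a sensible research program, consistent with how the $M=1$ case was settled, but as written it is a plan with acknowledged holes at precisely the points where the conjecture is hard, not a proof.
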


\begin{remark}
  In the case of $\mT(1,g+1)$ and $\mT(2g-1,2)$,
  the smooth tropical spectral curve $\Gamma$ is 
  hyperelliptic and has genus $g$.
  For $\mT(1,g+1)$, Conjectures \ref{conj:1} and \ref{conj:2} 
  are completely proved \cite{IT09,IT09b}.  
  For $\mT(3,2)$, Conjecture \ref{conj:1} is shown in \S \ref{sec:32}.  
\end{remark}

\section*{Acknowledgements}

R.~I. thanks the organizers of the international conference 
``Infinite Analysis 09 
--- New Trends in Quantum Integrable Systems'' 
held in Kyoto University in July 2009,
for giving her an opportunity to give a talk.
She also thanks Takao Yamazaki for advice on the manuscript.

R.~I. is partially supported by Grant-in-Aid for Young Scientists (B)
(19740231).

S.~I. is supported by KAKENHI
(21-7090).

\end{document}